\def\sbar{\accentset{{\cc@style\underline{\mskip13mu}}}}
\def\mbar{\accentset{{\cc@style\underline{\mskip25mu}}}}
\theoremstyle{definition}
\newtheorem{Def}{Definition}[section]
\newtheorem{Lem}[Def]{Lemma}
\newtheorem{Prop}[Def]{Proposition}
\newtheorem{Thm}[Def]{Theorem}
\newtheorem{Cor}[Def]{Corollary}
\newtheorem{Rmk}[Def]{Remark}
\theoremstyle{plain}
\newtheorem{Main}{Main Theorem}
\title{On the Rosenhain forms of superspecial curves of genus two}
\author{Ryo Ohashi}
\begin{document}
\maketitle
\begin{abstract}
In this paper, we examine superspecial genus-2 curves $C: y^2 = x(x-1)(x-\lambda)(x-\mu)(x-\nu)$ in odd characteristic $p$.
As a main result, we show that the difference between any two elements in $\{0,1,\lambda,\mu,\nu\}$ is a square in $\mathbb{F}_{p^2}$.
Moreover, we show that $C$ is maximal or minimal over $\mathbb{F}_{p^2}\!$ without taking its $\mathbb{F}_{p^2}$-form (we also give a criterion in terms of $p$ that tells whether $C$ is maximal or minimal).
As these applications, we study the maximality of superspecial hyperelliptic curves of genus $3$ and $4$ whose automorphism groups contain $\mathbb{Z}/2\mathbb{Z} \times \mathbb{Z}/2\mathbb{Z}$.
\end{abstract}

\section{Introduction}\label{Introduction}
Throughout this paper, a curve always means a non-singular projective variety of dimension one defined over a field of characteristic $p \geq 3$.
An elliptic curve $E$ is called {\it supersingular} if the $p$-torsion group of $E$ is trivial.
Recently, supersingular elliptic curves are often used in protocols of isogeny-based cryptosystems.
One of the reasons is that all supersingular elliptic curves are defined over $\mathbb{F}_{p^2}$, and so we can do computations without a field extension any further.
As a more advanced result by Auer-Top \cite{AT}, they investigated Legendre forms of supersingular elliptic curves; if an elliptic curve $E: y^2 = x(x-1)(x-t)$ is supersingular, then $-t$ is eighth power in $\mathbb{F}_{p^2}$.

In this paper, we focus mainly on superspecial genus-2 curves.
Here, a curve $C$ is called {\it superspecial} when the Jacobian variety of $C$ is isomorphic to the product of supersingular elliptic curves.
It is known \cite{Nygaard} that the curve $C$ is superspecial if and only if the Cartier operator on $H^0(C,\varOmega_C)$ vanishes.
Superspecial curves are not only important objects in algebraic geometry, but also have applications to such as cryptography and coding theory.
Here, let us review previous works on superspeciality of genus-2 curves briefly: Ibukiyama-Katsura-Oort determined the exact number of isomorphism classes of superspecial genus-2 curves in \cite[Theorem 3.3]{IKO}.
In particular, there is such a curve for arbitrary characteristics $p \geq 5$.
Jordan-Zaytman \cite[Section 7]{JZ} showed that the superspecial $(2,2)$-isogeny graph is connected, which implies that all superspecial genus-2 curves can be listed by using $(2,2)$-isogenies (see Subsection \ref{Richelot} for detail).
Katsura-Takashima counted the number of superspecial $(2,2)$-isogenies in \cite[Section 6]{KT}.
In terms of application, Castryck-Decru-Smith\,\cite{CDS} constructed hash functions using superspecial genus-2 curves.

Our first contribution on superspecial genus-2 curves is that we give a variant of Auer-Top's result.
More precisely, we obtain the following result on Rosenhain forms of superspecial genus-2 curves:
\begin{Main}
Assume that the genus-2 curve
\[
    C: y^2 = x(x-1)(x-\lambda)(x-\mu)(x-\nu)
\]
is superspecial. Then, the following statements are true:
\begin{enumerate}
\item All the 9 values
\[
    \lambda,\,\mu,\,\nu,\,1-\lambda,\,1-\mu,\,1-\nu,\,\lambda-\mu,\,\mu-\nu,\,\nu-\lambda
\]
are squares in $\mathbb{F}_{p^2}$.
\item All the 5 values
\[
    \lambda\mu\nu,\,(1-\lambda)(1-\mu)(1-\nu),\,\lambda(\lambda-1)(\lambda-\mu)(\lambda-\nu),\,\mu(\mu-1)(\mu-\lambda)(\mu-\nu),\,\nu(\nu-1)(\nu-\lambda)(\nu-\mu)
\]
are fourth powers in $\mathbb{F}_{p^2}$.
\end{enumerate}
\end{Main}

The other main result in this paper concerns the maximality of genus-2 curves. Here, a curve $C$ is called {\it maximal} (resp.\ {\it minimal}) over $\mathbb{F}_q$ with $q=p^{2e}$ if the number of $\mathbb{F}_q$-rational points of $C$ attains the Hasse-Witt upper (resp.\ lower) bound.
It is known that maximal or minimal curves over $\mathbb{F}_{p^2}\hspace{-0.3mm}$ are all superspecial, whereas all superspecial curves over $\mathbb{F}_{p^2}\hspace{-0.3mm}$ are not necessarily maximal or minimal (cf. \cite[Section II, Theorem 1.1]{Ekedahl}).
On the other hand, it follows from Auer-Top's result \cite[Proposition 2.2]{AT} that all supersingular Legendre elliptic curves $E: y^2 = x(x-1)(x-t)$ are maximal or minimal over $\mathbb{F}_{p^2}$.
In \cite[Theorem 1.1]{Ohashi1}, the author studied the maximality of hyperelliptic genus-3 curves $H: y^2 = (x^4-ax^2+1)(x^4-bx^2+1)$, whose automorphism groups contain $(\mathbb{Z}/2\mathbb{Z})^3$; if $H$ is superspecial, then $H$ is maximal or minimal over $\mathbb{F}_{p^2}$.
In \cite[Theorem 1.1]{Ohashi2}, the author also studied the maximality of Ciani quartics $H'\hspace{-0.3mm}: x^4 + y^4 + z^4 + rx^2y^2 + sy^2z^2 + tz^2x^2 = 0$, which are non-hyperelliptic genus-3 curves whose automorphism groups contain the dihedral group of order $4$; if $H'$ is superspecial, then $H'$ is maximal or minimal over $\mathbb{F}_{p^2}$.
In this paper, we give a variant of these results for superspecial genus-2 curves:
\begin{Main}
Assume that the genus-2 curve
\[
    C: y^2 = x(x-1)(x-\lambda)(x-\mu)(x-\nu)
\]
is superspecial. Then, the curve $C$ is maximal or minimal over $\mathbb{F}_{p^2}$. More precisely, we have the following:
\begin{itemize}
\item The case of $p \equiv 3 \pmod{4}$: the curve $C$ is maximal over $\mathbb{F}_{p^2}$.
\item The case of $p \equiv 1 \pmod{4}$: the curve $C$ is minimal over $\mathbb{F}_{p^2}$.
\end{itemize}
In particular, the curve $C$ is maximal or minimal over $\mathbb{F}_{p^2}$.
\end{Main}

As an application of Main Theorem B, we examine hyperelliptic genus-3 curves $D$ whose automorphism groups contain $(\mathbb{Z}/2\mathbb{Z})^2$ and hyperelliptic genus-4 curves $D'$ whose automorphism groups contain $(\mathbb{Z}/2\mathbb{Z})^2$ in Section \ref{Application}.
We find a form of $D$ (resp. $D'$) such that its superspeciality implies its maximality or minimality over $\mathbb{F}_{p^2}$, and we give an explicit criterion whether $D$ (resp. $D'$) is maximal or minimal over $\mathbb{F}_{p^2}$.\par\smallskip
Now, the rest of this paper is organized as follows: Section 2 is devoted to preliminaries for genus-2 curves.
In Subsections 3.1 and 3.2, we prove Main Theorems A and B respectively.
Finally, we give three applications of these main theorems in Section 4.\vspace{-1.5mm}

\subsection*{Acknowledgements}
The author thank Prof.\,Shushi Harashita for his helpful comments on the abstract description of $(2,2)$-isogeny between the Jacobian varieties of genus-2 curves.
This research was conducted under a contract of "Research and development on new generation cryptography for secure wireless communication services" among "Research and Development for Expansion of Radio\,Wave Resources (JPJ000254)", which was supported by the Ministry of Internal Affairs and Communications, Japan.\vspace{-1.5mm}

\section{Preliminaries}\label{Preliminaries}
In this section, we collect some results on genus-2 curves. In Subsection \ref{Rosenhain}, we give an explicit transformation from a given genus-2 curve into its Rosenhain form.
In Subsection \ref{RA}, we review the classification of genus-2 curves by reduced automorphism groups. In Subsection \ref{Richelot}, we recall how to enumerate superspecial genus-2 curves by using Richelot isogeny.
Let $K$ be a field of characteristic $p \geq 3$ throughout this section. 

\subsection{Rosenhain forms of genus-2 curve}\label{Rosenhain}
Any genus-2 curve has just 6 Weierstrass points, and we can consider an isomorphism $C \cong C_{\lambda,\mu,\nu}\!$ which maps three of them to $0,1$ and $\infty$.
\begin{Def}
Given a genus-2 curve $C$, we say that
\[
    C_{\lambda,\mu,\nu}: y^2 = x(x-1)(x-\lambda)(x-\mu)(x-\nu)
\]
is a {\it Rosenhain form} of $C$ when there exists an isomorphism $C \cong C_{\lambda,\mu,\nu}$ over the algebraic closure of $K$. Then the values $\lambda,\mu$ and $\nu$ are called {\it Rosenhain invariants} of $C_{\lambda,\mu,\nu}$.
\end{Def}
Consider the genus-2 curve
\begin{equation}\label{sextic}
    C: Y^2 = c(X-a_1)(X-a_2)(X-a_3)(X-a_4)(X-a_5)(X-a_6), \quad c \in K^\times,
\end{equation}
where each $a_i$ is a distinct element of $ \sbar{K} \cup \{\infty\}$.
If $a_i = \infty$, then we mean that the factor $(X-a_i)$ is excluded from the above equation.
The transformation
\[
    X \mapsto \frac{X-a_1}{X-a_3} \cdot \frac{a_2-a_3}{a_2-a_1} =: x, \ \, Y \mapsto \frac{c^{-1}Y}{(a_3-a_1)(a_3-a_2)(a_3-a_4)(a_3-a_5)(a_3-a_6)}\biggl(\frac{a_3-a_1}{X-a_3}\cdot\frac{a_2-a_3}{a_2-a_1}\bigg)^{\!\!3} =: y
\]
gives the equation defined by
\[
    \kappa y^2 = x(x-1)\biggl(x-\frac{(a_4-a_1)(a_2-a_3)}{(a_4-a_3)(a_2-a_1)}\biggr)\biggl(x-\frac{(a_5-a_1)(a_2-a_3)}{(a_5-a_3)(a_2-a_1)}\biggr)\biggl(x-\frac{(a_6-a_1)(a_2-a_3)}{(a_6-a_3)(a_2-a_1)}\biggr)
\]
with
\begin{equation}\label{epsilon}
    \kappa := c(a_1-a_2)(a_3-a_4)(a_3-a_5)(a_3-a_6).
\end{equation}
Note that $\kappa$ is a non-zero element of $\sbar{K}$ since $i \neq j \Rightarrow a_i \neq a_j$ by the assumption. Setting
\begin{equation}\label{lmn}
    \lambda := \frac{(a_4-a_1)(a_2-a_3)}{(a_4-a_3)(a_2-a_1)}, \quad \mu := \frac{(a_5-a_1)(a_2-a_3)}{(a_5-a_3)(a_2-a_1)}, \quad \nu := \frac{(a_6-a_1)(a_2-a_3)}{(a_6-a_3)(a_2-a_1)},
\end{equation}
then we see that $C_{\lambda,\mu,\nu}$ is a Rosenhain form of $C$.
\begin{Rmk}
Since the number of choices of $(a_1,a_2,a_3)$ is equal to $6 \times 5 \times 4 = 120$, there are 120 Rosenhain forms of $C$ at most (cf. \cite[Lemma 5]{OKH}).
\end{Rmk}

In particular case that all $a_i$ belong to $K \cup \{\infty\}$, then the isomorphism $C \cong C_{\lambda,\mu,\nu}$ which we constructed is defined over $K$ if and only if $\kappa$ is a square in $K$.
In short, we have the following lemma.
\begin{Lem}\label{defined}
Consider the genus-2 curve $C$ given as (\ref{sextic}), where each $a_i$ is a distinct element of $K \cup \{\infty\}$.
Then the curve $C_{\lambda,\mu,\nu}$ is a Rosenhain form of $C$, where $\lambda,\mu,\nu$ are defined as (\ref{lmn}).
In addition, the curve $C$ is isomorphic to $C_{\lambda,\mu,\nu}$ over $K$ if and only if $\kappa$ is a square in $K$, where $\kappa$ is defined as (\ref{epsilon}).
\end{Lem}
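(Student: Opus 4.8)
The plan is to read off both assertions from the explicit transformation displayed just before the lemma. For the first assertion, note that since $a_i\neq a_j$ whenever $i\neq j$, each difference $a_i-a_j$ occurring as a factor in that transformation is invertible in $\sbar{K}$, so the transformation is a well-defined isomorphism over $\sbar{K}$ from $C$ onto the curve $\kappa y^2 = x(x-1)(x-\lambda)(x-\mu)(x-\nu)$; post-composing it with the scaling $(x,y)\mapsto(x,\sqrt{\kappa}\,y)$, which exists over the algebraically closed field $\sbar{K}$, produces an isomorphism $C\xrightarrow{\sim}C_{\lambda,\mu,\nu}$ over $\sbar{K}$, so that $C_{\lambda,\mu,\nu}$ is a Rosenhain form of $C$. (If one of the $a_i$ equals $\infty$, the transformation is modified in the evident way and the same conclusion holds.)

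For the equivalence, the key point I would make is that when every $a_i$ lies in $K\cup\{\infty\}$ the M\"obius map $X\mapsto x$ and the accompanying map $Y\mapsto y$ have all of their coefficients in $K$, so together they already give an isomorphism over $K$ between $C$ and $C'\colon \kappa y^2 = x(x-1)(x-\lambda)(x-\mu)(x-\nu)$, where $\lambda,\mu,\nu\in K$ and $\kappa\in K^{\times}$. Thus the only ingredient not a priori defined over $K$ is the final scaling by $\sqrt{\kappa}$, and the constructed isomorphism $C\to C_{\lambda,\mu,\nu}$ is defined over $K$ exactly when $\sqrt{\kappa}\in K$, i.e.\ when $\kappa$ is a square in $K$. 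In the same vein, if $\kappa=s^{2}$ with $s\in K^{\times}$ then $(x,y)\mapsto(x,y/s)$ is an isomorphism $C_{\lambda,\mu,\nu}\to C'$ over $K$; together with $C\cong C'$ over $K$ this also gives the ``if'' direction of the stronger statement that $C$ and $C_{\lambda,\mu,\nu}$ are abstractly $K$-isomorphic precisely when $\kappa$ is a square.

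For the converse of that stronger statement I would use that a genus-two curve carries a unique hyperelliptic involution, which is central in its automorphism group and hence preserved by every isomorphism. Therefore a $K$-isomorphism $\Phi\colon C_{\lambda,\mu,\nu}\to C'$ descends to an automorphism $\sbar{\Phi}\in\mathrm{PGL}_2(K)$ of the common quotient $\mathbb{P}^{1}$, and $\sbar{\Phi}$ must send the branch locus $\{0,1,\lambda,\mu,\nu,\infty\}$ of $C_{\lambda,\mu,\nu}$ onto that of $C'$, hence permutes $\{0,1,\lambda,\mu,\nu,\infty\}$. When $\sbar{\Phi}=\mathrm{id}$, the rational function $\beta:=(y\circ\Phi)/y$ is invariant under the hyperelliptic involution and has no zeros or poles (because $\Phi$ carries each Weierstrass point to a Weierstrass point over the same branch value), hence is a constant lying in $K^{\times}$; substituting $y\circ\Phi=\beta y$ into the defining equations forces $\kappa\beta^{2}=1$, so $\kappa=\beta^{-2}$ is a square in $K$. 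The step I expect to be the main obstacle is precisely the remaining possibility $\sbar{\Phi}\neq\mathrm{id}$, which can occur only when the Rosenhain set $\{0,1,\lambda,\mu,\nu,\infty\}$ admits a nontrivial linear-fractional symmetry, that is, when $C$ has extra automorphisms; there I would try to reduce to the identity case by pre-composing $\Phi$ with a lift of $\sbar{\Phi}$ to an automorphism of $C_{\lambda,\mu,\nu}$, the delicate point being whether such a lift can be chosen over $K$. This complication does not touch the explicitly constructed isomorphism of the first two paragraphs, which is the reading suggested by the discussion preceding the lemma.
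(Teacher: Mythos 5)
Your first two paragraphs are exactly the paper's proof: the lemma is disposed of there with the single sentence that it is a ``direct result of the above discussion,'' i.e.\ of the explicit transformation displayed before it, and the second assertion is meant in precisely the sense you isolate --- the \emph{constructed} isomorphism $C\to C_{\lambda,\mu,\nu}$ is defined over $K$ exactly when the final rescaling $y\mapsto\sqrt{\kappa}\,y$ is, that is, when $\kappa$ is a square in $K$. (This is also the only form in which the lemma is ever invoked; see Lemma~\ref{alltoone} and Proposition~\ref{C2max}, which use the explicit map and the quadratic-twist relation.) Your third paragraph aims at a genuinely stronger claim --- that \emph{no} $K$-isomorphism $C\cong C_{\lambda,\mu,\nu}$ exists when $\kappa$ is a non-square --- which the paper neither proves nor needs, and the obstacle you flag there is not merely a technical gap: when the branch locus $\{0,1,\lambda,\mu,\nu,\infty\}$ admits a nontrivial linear-fractional symmetry, a genus-2 curve can be $K$-isomorphic to a nontrivial quadratic twist of itself. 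For instance, for $f(x)=x(x^2-1)(x-\lambda)(x-\lambda^{-1})$ one has $x^6f(1/x)=-f(x)$, so $(x,y)\mapsto(1/x,\,y/x^{3})$ is a $K$-isomorphism from $y^2=f(x)$ onto its $(-1)$-twist even when $-1$ is a non-square in $K$. So the biconditional in the strong reading fails in general, and your argument is complete only in the case $\sbar{\Phi}=\mathrm{id}$ (equivalently, for curves with trivial reduced automorphism group). The correct fix is not to close that gap but to read the lemma, as the paper does, as a statement about the isomorphism constructed in Subsection~\ref{Rosenhain}.
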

\begin{proof}
This is a direct result of the above discussion.
\end{proof}

At the last of this subsection, we prepare the following lemma for the proofs of our Main Theorems.
\begin{Lem}\label{alltoone}
Assume that all the 9 values
\[
    \lambda,\,\mu,\,\nu,\,1-\lambda,\,1-\mu,\,1-\nu,\,\lambda-\mu,\,\mu-\nu,\,\nu-\lambda
\]
are squares in $K$.
Then, the transformation from $C_{\lambda,\mu,\nu}$ to any Rosenhain form of $C_{\lambda,\mu,\nu}$ is defined over $K$.
\end{Lem}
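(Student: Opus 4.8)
The plan is to apply Lemma \ref{defined} to $C_{\lambda,\mu,\nu}$ itself. Regarded as a curve of the form (\ref{sextic}), $C_{\lambda,\mu,\nu}$ has leading coefficient $c = 1$, and its six Weierstrass points $a_1, \dots, a_6$ run over $\{0, 1, \lambda, \mu, \nu, \infty\}$; all of these lie in $K \cup \{\infty\}$, since $\lambda, \mu, \nu$ are squares in $K$ and hence belong to $K$. Every Rosenhain form of $C_{\lambda,\mu,\nu}$ arises, through the construction preceding Lemma \ref{defined}, by choosing an ordered triple $(a_1, a_2, a_3)$ of three of these six points (to be sent to $0, 1, \infty$), and by Lemma \ref{defined} the corresponding transformation is defined over $K$ precisely when
\[
    \kappa \;=\; (a_1 - a_2)(a_3 - a_4)(a_3 - a_5)(a_3 - a_6)
\]
is a square in $K$, with the convention that a factor is deleted whenever $\infty$ occurs in it. So the statement reduces to showing that this $\kappa$ is a square in $K$ for every such choice of $(a_1, a_2, a_3)$.

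I would verify this by splitting into cases according to where $\infty$ sits, after one preliminary remark. Put $f(x) = x(x-1)(x-\lambda)(x-\mu)(x-\nu)$; then for each finite Weierstrass point $a$ the value $f'(a) = \prod_{b \neq a}(a - b)$ is a square in $K$. Indeed $f'(0) = \lambda\mu\nu$, $f'(1) = (1-\lambda)(1-\mu)(1-\nu)$, $f'(\lambda) = \lambda(1-\lambda)(\lambda-\mu)(\nu-\lambda)$, and likewise for $f'(\mu)$ and $f'(\nu)$, so in each case $f'(a)$ is a product of the nine quantities assumed to be squares, the two reversed-difference sign changes cancelling. Now: if $\infty = a_3$ then $\kappa = a_1 - a_2$; if $\infty$ is one of $a_1, a_2$ then $\kappa = (a_3 - a_4)(a_3 - a_5)(a_3 - a_6) = f'(a_3)/(a_3 - a')$, where $a'$ denotes the finite one of $a_1, a_2$ and we use that $\{a', a_4, a_5, a_6\}$ is exactly the set of finite Weierstrass points other than $a_3$; and if $\infty$ is one of $a_4, a_5, a_6$ then $\kappa = (a_1 - a_2)(a_3 - b_1)(a_3 - b_2) = (a_1 - a_2)\,f'(a_3)/\bigl((a_3 - a_1)(a_3 - a_2)\bigr)$, where $b_1, b_2$ are the two finite members of $\{a_4, a_5, a_6\}$. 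Since $f'(a_3)$ is a square (whenever it occurs), $\kappa$ is a square in $K$ if and only if, in the three cases respectively, $a_1 - a_2$, or $a_3 - a'$, or $(a_1 - a_2)(a_3 - a_1)(a_3 - a_2)$ is a square in $K$; and each of those is, up to sign, a product of some of the nine hypothesized squares. Running through the finitely many configurations of $\{a_1, a_2, a_3\}$ then shows $\kappa$ is always a square, and Lemma \ref{defined} finishes the proof.

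The step I expect to be the real work is this last sign bookkeeping: a difference of Weierstrass points may appear ``reversed'' --- for instance $\mu - \lambda$ rather than the hypothesized $\lambda - \mu$ --- so one has to control the parity of such reversals inside each $\kappa$ and check that the leftover sign does no harm. Over the field relevant to Main Theorems A and B, namely $K = \mathbb{F}_{p^2}$, this is automatic, since $-1$ is a square there; the substance of the lemma is then just the observation that every $\kappa$ is, up to the square $f'(a_3)$, a product of the nine squares furnished by the hypothesis.
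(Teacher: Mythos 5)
Your proposal is correct and follows the same route as the paper: reduce, via Lemma \ref{defined}, to checking that $\kappa$ is a square in $K$ for every choice of $(a_1,a_2,a_3)$. The paper's own proof is just the one-line observation that by hypothesis \emph{all} differences $a_i-a_j$ ($i,j\in\{1,\ldots,6\}$) are squares in $K$, so every $\kappa$ is a product of squares; the sign worry you flag is genuine, since the hypothesis lists only one ordering of each pair and the reversed ordering is $-1$ times a square, so the paper's claim (and in fact the lemma itself for general $K$) silently uses that $-1$ is a square in $K$. That is automatic in the only case where the lemma is applied, $K=\mathbb{F}_{p^2}$, and there your finer bookkeeping via $f'(a_3)$ collapses to the paper's one-liner.
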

\begin{proof}
Let $\{a_1,a_2,a_3,a_4,a_5,a_6\} = \{0,1,\infty,\lambda,\mu,\nu\}$ and $c=1$ in the equation (\ref{sextic}), then we have $C = C_{\lambda,\mu,\nu}$ clearly.
By the assumption, all $a_i-a_j$ are squares in $K$ for $i,j \in \{1,\ldots,6\}$.
Then $\kappa$ is also a square in $K$ for any choice $(a_1,a_2,a_3)$, and thus all transformations are defined over $K$ by Lemma \ref{defined}.
\end{proof}

\subsection{Reduced automorphism groups of genus-2 curves}\label{RA}
From now on, we have the following notations for the groups:
\begin{itemize}
    \item Let ${\rm C}_n\,(\cong \mathbb{Z}/n\mathbb{Z})$ be the cyclic group of order $n$.
    \item Let ${\rm D}_{2n}\hspace{-0.3mm}$ be the dihedral group of order $2n$.
    \item Let ${\rm S}_n\hspace{-0.3mm}$ be the symmetric group of degree $n$.
\end{itemize}
First of all, we recall the definition of the reduced automorphism group of a hyperelliptic curve.
\begin{Def}
For a hyperelliptic curve $C$, the {\it reduced automorphism group} of $C$ is defined as
\[
    {\rm RA}(C) := {\rm Aut}(C)/\langle\iota\rangle,
\]
where ${\rm Aut}(C)$ denotes the automorphism group of $C$ over $\sbar{K}$, and $\iota$ is the hyperelliptic involution. 
\end{Def}

Igusa \cite[Section 8]{Igusa} classified all genus-2 curves $C$ by their reduced automorphism groups and gave explicit equations of a Rosenhain forms of $C$ (see also Ibukiyama-Katsura-Oort \cite[Section 1.2]{IKO}).
On the other hand, Katsura-Takashima \cite[Section 5]{KT} gave other equations of $C$ (we call them {\it normal forms} of $C$).
The following is a table summarizing their results for $p \geq 7$; there are 7 possible reduced automorphism groups.
We denote by $i$ a square root of $-1$ and by $\zeta$ a primitive fifth root of unity.\vspace{-1mm}

\begin{center}
\begin{table}[h]
\begin{tabular}{|c|c|c|}\hline
     ${\rm RA}(C)$ & A normal form of $C$ & A Rosenhain form of $C$\\\hline
     $\{1\}$ & --------- & $y^2 = x(x-1)(x-\lambda)(x-\mu)(x-\nu)$\\[0.5mm]
     ${\rm C}_2$ & $y^2 = (x^2-1)(x^2-a)(x^2-b)$ & $y^2=x(x-1)(x-\lambda)(x-\mu)\bigl(x-\frac{\lambda(1-\mu)}{1-\lambda}\bigr)$ \\[1mm]
     ${\rm S}_3$ & $y^2 = (x^3-1)(x^3-a)$ & $y^2=x(x-1)(x-\lambda)(x-\frac{\lambda-1}{\lambda})\bigl(x-\frac{1}{1-\lambda}\bigr)$\\[1mm]
     $({\rm C}_2)^2$ & $y^2 = x(x^2-1)(x^2-a)$ & $y^2 = x(x-1)(x+1)(x-\lambda)\bigl(x-\frac{1}{\lambda}\bigr)$\\[1mm]
     ${\rm D}_{12}$ & $y^2 = x^6-1$ & $y^2 = x(x-1)(x+1)(x-2)\bigl(x-\frac{1}{2}\bigr)$\\[0.5mm]
     ${\rm S}_4$ & $y^2 = x^5-x$ & $y^2 = x(x-1)(x+1)(x-i)(x+i)$ \\[0.5mm]
     ${\rm C}_5$ & $y^2 = x^5-1$ & $y^2 = x(x-1)(x-1-\zeta)(x-1-\zeta-\zeta^2)(x-1-\zeta-\zeta^2-\zeta^3)$\\\hline
\end{tabular}
\[
    \begin{tikzcd}[column sep = small]
        && \{1\} \arrow[ld,dash]\arrow[rrrrrddd,dash] &&&&&\\
        & {\rm C}_2 \arrow[ld,dash]\arrow[rd,dash] &&&&&&\\
        \hspace{-2.5mm}({\rm C}_2)^2 \arrow[rd,dash] && {\rm S}_3 \arrow[ld,dash]\arrow[rd,dash] &&&&&\\
        & {\rm D}_{12} && \hspace{2mm}{\rm S}_4 &&&& {\rm C}_5
    \end{tikzcd}\vspace{-3mm}
\]
\caption{The possible reduced automorphism groups of genus-2 curves}
\end{table}
\end{center}\vspace{-8mm}

\begin{Rmk}\label{p35}
For $p=3$, the case that ${\rm RA}(C) \cong {\rm D}_{12}$ disappears since $C: y^2 = x^6 - 1$ has a singular point.
For $p=5$, the last three cases reduce the case that ${\rm RA}(C) \cong {\rm PGL}(2,5)$ according to \cite[p.\,645]{Igusa}.
\end{Rmk}
\begin{Rmk}
There is a criterion to determine the reduced automorphism group of a genus-2 curve by using the Clebsch invariants (cf. \cite[Section 3.2]{FS}).
\end{Rmk}

Not all of the classifications in the table above are necessary, but we will only use the following result.
\begin{Prop}\label{classification}
Assume that $p \geq 3$. Let $C$ be a genus-2 curve, then the following statements are true:
\begin{enumerate}
\item If ${\rm RA}(C) \cong {\rm C}_5$, then $C$ is isomorphic to the curve defined by the equation $y^2 = x^5-1$.
\item If ${\rm RA}(C) \supset {\rm C}_2$, then $C$ is isomorphic to the curve defined by the equation
\begin{equation}\label{split}
    y^2 = (x^2-1)(x^2-a)(x^2-b)
\end{equation}
for some $a,b$ (belonging to $\sbar{K}$).
\item Otherwise, the reduced automorphism group of $C$ is trivial (i.e.\ \,${\rm RA}(C) = \{1\}$).
\end{enumerate}
\end{Prop}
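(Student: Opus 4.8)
The plan is to work throughout with the induced action of the reduced automorphisms on $\mathbb{P}^1$. Recall the standard fact that ${\rm RA}(C)={\rm Aut}(C)/\langle\iota\rangle$ is canonically the stabilizer, inside ${\rm PGL}_2(\sbar{K})$, of the $6$-element set $W\subset\mathbb{P}^1(\sbar{K})$ consisting of the images of the Weierstrass points under the quotient $C\to C/\langle\iota\rangle\cong\mathbb{P}^1$: a fractional linear transformation preserving $W$ changes the defining sextic only by a square factor over $\sbar{K}$, hence lifts to an automorphism. So it suffices to understand finite subgroups $G\le{\rm PGL}_2(\sbar{K})$ preserving a $6$-point set, and in particular the orbit patterns on $W$ of elements of prime order.

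For (2), assume $G$ contains an involution $\tau$. Since $p\ne 2$, $\tau$ is diagonalizable with exactly two fixed points on $\mathbb{P}^1$ and acts freely off them, so a parity count forces $|W\cap{\rm Fix}(\tau)|\in\{0,2\}$. If it is $0$, then after conjugating $\tau$ to $x\mapsto -x$ the set $W$ has the shape $\{\pm e_1,\pm e_2,\pm e_3\}$ with $0,\infty\notin W$, and rescaling $x$ brings $C$ to the form $(\ref{split})$. If it is $2$, the same conjugation gives $W=\{0,\infty,\pm\sqrt{c_1},\pm\sqrt{c_2}\}$; but then $x\mapsto\sqrt{c_1c_2}/x$ is a second involution preserving $W$, whose fixed points $\pm(c_1c_2)^{1/4}$ avoid $W$ because $c_1\ne c_2$. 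Replacing $\tau$ by it returns to the previous case, so $C$ is again of the form $(\ref{split})$.

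For (1) and (3), let $\{1\}\ne G={\rm RA}(C)$ and pick $\sigma\in G$ of prime order $\ell$. If $\ell\ne p$, then $\sigma$ is semisimple with two fixed points and all other orbits of length $\ell$, so $6=|W\cap{\rm Fix}(\sigma)|+\ell k$ with $|W\cap{\rm Fix}(\sigma)|\le 2$; this forces $\ell\in\{2,3,5\}$. If $\ell=p$, then $\sigma$ is unipotent with a single fixed point, which forces $p\in\{3,5\}$. Whenever $\ell=3$, or $\sigma$ is a unipotent element of order $3$ or $5$, the set $W$ is a union of two $\langle\sigma\rangle$-orbits, possibly together with one fixed point — up to ${\rm PGL}_2(\sbar{K})$ these are the roots of $(x^3-a)(x^3-b)$, or $\mathbb{F}_5\cup\{\infty\}$ when $p=5$, or two translates of $\mathbb{F}_3$ when $p=3$ — and in each of these one exhibits by inspection an involution preserving $W$ (a suitable $x\mapsto c/x$, or $x\mapsto c-x$ in the unipotent cases), so $G\supseteq{\rm C}_2$ and we are back in case (2). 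Therefore, if $G$ has no involution, all its prime-order elements have order $5$ and $p\ne 5$, so $G$ is a $5$-group in ${\rm PGL}_2(\sbar{K})$, hence cyclic, hence $\cong{\rm C}_5$ by the same orbit count; moreover $W$ is one fixed point of $\sigma$ together with a single length-$5$ orbit $\{r,\zeta r,\dots,\zeta^4 r\}$, so placing that fixed Weierstrass point at $\infty$ and rescaling $x$ identifies $C$ with $(y^2=x^5-1)$. This proves (1), and (3) is the restatement of the dichotomy ``$G\supseteq{\rm C}_2$ or $G\cong{\rm C}_5$'' just obtained.

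The delicate point is making sure the orbit bookkeeping is exhaustive, above all in characteristics $3$ and $5$ where unipotent elements appear; I would secure this using the classification of finite subgroups of ${\rm PGL}_2(\sbar{K})$ (cyclic, dihedral, polyhedral, or — in characteristic $p$ — possessing a normal Sylow $p$-subgroup) together with the rigid constraint coming from $|W|=6$. As a shortcut, the whole proposition can also be read off directly from Igusa's classification recalled in the table above: of the seven reduced automorphism groups only ${\rm C}_5$ fails to contain ${\rm C}_2$; the groups ${\rm C}_2,({\rm C}_2)^2,{\rm S}_3,{\rm D}_{12},{\rm S}_4$ each admit the form $(\ref{split})$ (the last four as degenerations of the ${\rm C}_2$ case); and ${\rm RA}(C)\cong{\rm C}_5$ occurs only for $y^2=x^5-1$.
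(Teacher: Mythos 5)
Your proof is correct, but it takes a genuinely different route from the paper, whose entire proof consists of citations: (1) is quoted from Igusa/Ibukiyama--Katsura--Oort, (2) from Gutierrez--Shaska's lemma on extra involutions, and (3) is read off the table of possible reduced automorphism groups together with Remark 2.5. You instead give a self-contained argument by identifying ${\rm RA}(C)$ with the stabilizer in ${\rm PGL}_2(\sbar{K})$ of the six branch points and doing orbit bookkeeping for prime-order elements. The details check out: the parity count $|W\cap{\rm Fix}(\tau)|\in\{0,2\}$ is right, the auxiliary involution $x\mapsto\sqrt{c_1c_2}/x$ in the fixed-point case does avoid $W$ precisely because $c_1\neq c_2$ (this is exactly the step needed to put the $({\rm C}_2)^2$ normal form $y^2=x(x^2-1)(x^2-a)$ into the shape (\ref{split}) --- note that the table's normal forms for $({\rm C}_2)^2$, ${\rm D}_{12}$, ${\rm S}_4$ are not literally of that shape, so your ``shortcut via the table'' at the end still needs this trick), and the constraint $6=f+\ell k$ correctly restricts prime orders to $2,3,5$ with the unipotent cases confined to $p\in\{3,5\}$ and disposed of by exhibiting the involutions $x\mapsto c-x$ and $x\mapsto -x$. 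The one place where you lean on an external input is ``a $5$-group in ${\rm PGL}_2(\sbar{K})$ is cyclic,'' which does require Dickson's classification of finite subgroups (or at least that odd-order prime-to-$p$ subgroups are cyclic); you flag this honestly. What each approach buys: the paper's proof is two lines and defers all the work to the literature; yours is longer but uniform in all characteristics $p\geq 3$, makes the degenerations at $p=3,5$ transparent rather than relegating them to a remark, and effectively reproves the cited Gutierrez--Shaska lemma along the way.
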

\begin{proof}
(1) This follows from \cite[Section 8]{Igusa} or \cite[Section 1.2]{IKO}.\par\vspace{0.5mm}
\hspace{5.6mm}(2) This follows from \cite[Lemma 2.2]{GS}.\par\vspace{0.5mm}
\hspace{5.6mm}(3) This assertion holds since no other reduced automorphism groups of $C$ exist except $\{1\}$, from the above table and Remark \ref{p35}.
\end{proof}

For the second case in Proposition \ref{classification}, there exist two involutions $\sigma,\tau$ defined by
\begin{align*}
    \sigma: C \rightarrow C\,&;\,(x,y) \mapsto (-x,y),\\
    \tau: C \rightarrow C\,&;\,(x,y) \mapsto (-x,-y).\\[-4.5mm]
\end{align*}
We put the quotients $E_1 := C/\langle\sigma\rangle$ and $E_2 := C/\langle\tau\rangle$, then they are elliptic curves defined by
\begin{alignat*}{2}
    E_1&: Y^2 = (X-1)(X-a)(X-b) & {\rm \ \,with\ } & X = x^2,\,Y = y,\\
    E_2&: Y^2 = X(X-1)(X-a)(X-b) & {\rm \,with\ } & X = x^2,\,Y = xy.
\end{alignat*}
Moreover, we see that these two morphisms $C \rightarrow E_1$ and $C \rightarrow E_2$ induce a $(2,2)$-isogeny ${\rm Jac}(C) \rightarrow E_1 \times E_2$. Conversely, it is known that if ${\rm Jac}(C)$ is $(2,2)$-isogenous to the product $E_1 \times E_2$ for elliptic curves $E_i$, then the curve $C$ coincided with the second case in Proposition \ref{classification} (i.e.\ \,${\rm RA}(C) \supset {\rm C}_2$) by \cite[Proposition\,4.3]{KT}.

\begin{Lem}\label{decomposed}
The curve $C$ of the form (\ref{split}) is superspecial if and only if there exist $t_1$ and $t_2$ such that
\[
    a = \frac{t_1}{t_2} \cdot \frac{1-t_2}{1-t_1}, \quad b = \frac{t_1}{t_2},
\]
and two elliptic curves $E_i: v^2 = u(u-1)(u-t_i)$ are supersingular.
\end{Lem}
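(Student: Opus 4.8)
The plan is to leverage the $(2,2)$-isogeny $\phi\colon {\rm Jac}(C) \to E_1 \times E_2$ recalled just before the statement, where $E_1\colon Y^2 = (X-1)(X-a)(X-b)$ and $E_2\colon Y^2 = X(X-1)(X-a)(X-b)$. Since $p \geq 3$, the degree of $\phi$ equals $4$ and is therefore prime to $p$, so $\phi$ induces an isomorphism on $p$-torsion subgroup schemes ${\rm Jac}(C)[p] \cong (E_1 \times E_2)[p]$. An abelian variety is superspecial exactly when the Frobenius vanishes on $H^1(-,\mathcal{O})$, a condition depending only on its $p$-torsion subgroup scheme; hence ${\rm Jac}(C)$ is superspecial if and only if $E_1 \times E_2$ is, and the latter holds if and only if both $E_1$ and $E_2$ are supersingular (for instance because the $a$-number of $E_1 \times E_2$ equals $a(E_1) + a(E_2)$, which is $2$ iff each elliptic curve has $a$-number $1$). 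Since $C$ is superspecial, by definition, exactly when ${\rm Jac}(C)$ is, the lemma reduces to the equivalence: $E_1$ and $E_2$ are both supersingular if and only if $a, b$ admit the parametrization in the statement with $v^2 = u(u-1)(u-t_i)$ supersingular for $i = 1, 2$.

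To establish this equivalence I would put $E_1$ and $E_2$ into Legendre form via explicit M\"obius transformations of the $X$-line. Translating by $1$ and rescaling gives $E_1 \cong \{v^2 = u(u-1)(u-\tau_1)\}$ with $\tau_1 = \tfrac{b-1}{a-1}$; sending one branch point of $E_2$ to infinity and normalizing the remaining three to $0, 1, \tau_2$ gives $E_2 \cong \{v^2 = u(u-1)(u-\tau_2)\}$ with $\tau_2 = \tfrac{a-b}{b(a-1)}$. Consequently $E_i$ is supersingular if and only if $v^2 = u(u-1)(u-\tau_i)$ is. I would also record that over $\sbar{K}$ the curves $v^2 = u(u-1)(u-s)$ and $v^2 = u(u-1)(u-s')$ are isomorphic, hence simultaneously supersingular, whenever $s'$ lies in the six-element orbit $\{s,\, 1-s,\, \tfrac{1}{s},\, \tfrac{1}{1-s},\, \tfrac{s-1}{s},\, \tfrac{s}{s-1}\}$ of $s$.

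Both implications are then short rational computations with these formulas. For the backward implication: given $t_1, t_2$ with $v^2 = u(u-1)(u-t_i)$ supersingular, define $b := \tfrac{t_1}{t_2}$ and $a := \tfrac{t_1}{t_2}\cdot\tfrac{1-t_2}{1-t_1}$ as in the statement; substituting one finds $\tau_1 = 1 - t_1$ and $\tau_2 = t_2$, so $E_1$ and $E_2$ are supersingular and $C$ is superspecial by the first paragraph. For the forward implication: if $C$ is superspecial then $E_1, E_2$ are supersingular; set $t_1 := \tfrac{a-b}{a-1}$ and $t_2 := \tfrac{a-b}{b(a-1)}$. One has $t_1 = 1 - \tau_1$ and $t_2 = \tau_2$, so $v^2 = u(u-1)(u-t_i) \cong E_i$ is supersingular for $i = 1, 2$; moreover $1 - t_1 = \tfrac{b-1}{a-1}$ and $1 - t_2 = \tfrac{a(b-1)}{b(a-1)}$, from which $b = \tfrac{t_1}{t_2}$ and $a = \tfrac{t_1}{t_2}\cdot\tfrac{1-t_2}{1-t_1}$ follow by direct substitution. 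Here $a, b, a-1, b-1, a-b$ are all nonzero since $C$ is a smooth genus-$2$ curve, and $t_1, t_2 \notin \{0, 1\}$ for the same reason, so every expression above makes sense and the $E_i$ are honest elliptic curves.

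The proof involves no serious conceptual obstacle. The one substantive ingredient is that superspeciality --- equivalently, the vanishing of Frobenius on $H^1(-,\mathcal{O})$ --- is invariant under isogenies of degree prime to $p$, which is precisely what transports the problem from ${\rm Jac}(C)$ to the product $E_1 \times E_2$. The only mildly delicate point is the bookkeeping of Legendre parameters: one must pick, within each six-element orbit, the representative making the two rational relations between $t_1, t_2$ and $a, b$ hold simultaneously, and this is what fixes the exact shape of the formulas $a = \tfrac{t_1}{t_2}\cdot\tfrac{1-t_2}{1-t_1}$ and $b = \tfrac{t_1}{t_2}$ in the statement.
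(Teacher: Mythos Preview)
Your proof is correct and follows essentially the same approach as the paper: both reduce the superspeciality of $C$ to the supersingularity of the two elliptic quotients $E_1,E_2$ via the degree-$4$ (hence prime-to-$p$) isogeny ${\rm Jac}(C)\to E_1\times E_2$, put each $E_i$ in Legendre form, and arrive at the same parameters $t_1=\tfrac{a-b}{a-1}$, $t_2=\tfrac{a-b}{b(a-1)}$ (the paper writes them as $\tfrac{b-a}{1-a}$ and $\tfrac{b-a}{b(1-a)}$, citing \cite{KO} for the coordinate change). Your write-up is in fact more explicit than the paper's in justifying why superspeciality passes through the isogeny, and you take a small detour through an intermediate Legendre parameter $\tau_1=\tfrac{b-1}{a-1}$ before setting $t_1=1-\tau_1$, but these are cosmetic differences.
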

\begin{proof}
As mentioned in the above discussion, there exists a $(2,2)$-isogeny $\phi: {\rm Jac}(C) \rightarrow E_1 \times E_2$ with
\begin{alignat*}{2}
    E_1&: Y^2 = (X-1)(X-a)(X-b),\\
    E_2&: Y^2 = X(X-1)(X-a)(X-b).\\[-5.5mm]
\end{alignat*}
By the same coordinate change as \cite[Section 2]{KO}, these elliptic curves are transformed into
\begin{alignat}{2}
    E_1&: (1-a)v^2 = u(u-1)\biggl(u-\frac{b-a}{1-a}\biggr)  &{\rm \ \,with\ \,}& u = \frac{X-a}{1-a},\ \,v = \frac{Y}{(1-a)^2},\label{E1}\\
    E_2&: b(1-a)v^2 = u(u-1)\biggl(u-\frac{b-a}{b(1-a)}\biggr) &{\rm \ \,with\ \,}& u = \frac{X-a}{X(1-a)},\ \,v = \frac{aY}{b(1-a)^2X^2}\label{E2}.
\end{alignat}
Hence, the curve $C$ is superspecial if and only if two elliptic curves $E_i: v^2 = u(u-1)(u-t_i)$ for $i = 1,2$ are supersingular
with
\[
    t_1 = \frac{b-a}{1-a}, \quad t_2 = \frac{b-a}{b(1-a)}.
\]
Solving these equations for $a$ and $b$, we have this lemma.
\end{proof}

\subsection{Richelot isogeny}\label{Richelot}
A {\it Richelot isogeny} is a $(2,2)$-isogeny whose domain is the Jacobian variety of a genus-2 curve.
Let us briefly recall the abstract description of this according to \cite[Section 3]{KT}, which helps readers understand.
Let $C$ be a genus-2 curve.
We denote by $J := {\rm Jac}(C)$ be the Jacobian variety of $C$, and denote by $J^t$ the dual abelian variety of $J$.
Considering $C$ as a divisor of $J$, it defines a principal polarization $\varphi_C: J \cong J^t$.
Therefore, the divisor $2C$ defines a polarization $\varphi_{2C} : J \to J^t$, whose kernel is equal to $J[2]$.
For each isotropic subgroup $G$ of $J[2]$ (we remark that $G$ is isomorphic to $\mathbb{Z}/2\mathbb{Z} \times \mathbb{Z}/2\mathbb{Z}$), we have a Richelot isogeny $\pi: J \to J/G$.
It follows from \cite[Section 23, Corollary of Theorem 2]{AV} that there exists a divisor $C'$ on $J/G$ such that $\pi^*(C')=2C$.
One can show that $C'$ defines a principal polarization on $J/G$.
By the construction (based on descent theory as in \cite[Section 12]{AV}), we see that $C'$ is defined over $K$ if both $C$ and $G$ are defined over $K$.

Richelot isogenies are useful in enumerating superspecial genus-2 curves. In fact, the following theorem by Jordan-Zaytman \cite[Section 7]{JZ} is known:
\begin{Thm}\label{connected}
For any $p \geq 3$, the superspecial $(2,2)$-isogeny graph $\mathcal{G}_p$ is connected.
\end{Thm}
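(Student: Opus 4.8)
I would prove the theorem by separating the vertices of $\mathcal{G}_p$ into the \emph{decomposable} ones --- products $E_1 \times E_2$ of supersingular elliptic curves equipped with the product principal polarization --- and the \emph{indecomposable} ones, which by the Torelli theorem are exactly the Jacobians ${\rm Jac}(C)$ of superspecial genus-$2$ curves $C$. Recall that each vertex $A$ has precisely $15$ outgoing $(2,2)$-isogenies, indexed by the $15$ Lagrangian subgroups of the symplectic $\mathbb{F}_2$-space $A[2] \cong \mathbb{F}_2^4$ (with respect to the Weil pairing). Connectedness of $\mathcal{G}_p$ then reduces to two claims: (i) the full subgraph $\mathcal{D} \subseteq \mathcal{G}_p$ on the decomposable vertices is connected; and (ii) every indecomposable vertex is joined to $\mathcal{D}$ by a path in $\mathcal{G}_p$.

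For Claim (i) I would use the classical fact that the supersingular elliptic-curve $2$-isogeny graph over $\overline{\mathbb{F}}_p$ is connected (all supersingular elliptic curves lie in a single isogeny class, and the $\ell$-isogeny graph is connected for every prime $\ell \neq p$). If $\phi_j \colon E_j \to E_j'$ for $j = 1, 2$ are $2$-isogenies of supersingular elliptic curves, then $\ker\phi_1 \times \ker\phi_2 \subseteq E_1[2] \times E_2[2]$ is isotropic for the product of the two Weil pairings, so $(\phi_1, \phi_2)$ induces a $(2,2)$-isogeny $E_1 \times E_2 \to E_1' \times E_2'$ of principally polarized surfaces. Moving the two elliptic factors independently along the connected elliptic graph (padding the shorter walk by a $2$-isogeny followed by its dual) joins any two decomposable vertices, so $\mathcal{D}$ is connected.

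Claim (ii) is the heart of the matter, and here I would pass to the lattice-theoretic description. Fix a supersingular elliptic curve $E_0$, so that $\mathcal{O} := {\rm End}(E_0)$ is a maximal order in the quaternion algebra $B_{p,\infty}$ ramified exactly at $p$ and $\infty$. Every superspecial abelian surface is isomorphic, as an abelian variety, to $E_0 \times E_0$ (Deligne--Shioda; see also \cite{IKO}), so the vertices of $\mathcal{G}_p$ correspond to ${\rm GL}_2(\mathcal{O})$-orbits of principal polarizations on $E_0^2$, that is, to the classes in the genus of positive-definite principal quaternionic-Hermitian $\mathcal{O}$-lattices of rank $2$; a $(2,2)$-isogeny $A \to A/G$ translates into replacing such a lattice by an appropriate $2$-neighbour. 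Under this dictionary $\mathcal{G}_p$ is identified with the $2$-neighbour graph on that genus, and its connectedness is equivalent to the genus forming a single orbit under the local group at $2$. This last point I would obtain from strong approximation for the simply connected, $\mathbb{Q}$-simple group $G := {\rm SU}_2(B_{p,\infty})$ (an inner form of ${\rm Sp}_4$): since $G(\mathbb{R}) \cong {\rm Sp}(2)$ is compact, one applies strong approximation with respect to the place $2$, using that $B_{p,\infty}$ splits at $2$ so that $G(\mathbb{Q}_2) \cong {\rm Sp}_4(\mathbb{Q}_2)$ is noncompact; the resulting density of $G(\mathbb{Q})$ in the finite adeles away from $2$ forces the class set to be a single $G(\mathbb{Q}_2)$-orbit --- precisely the mechanism behind Eichler's proof that the supersingular $\ell$-isogeny graph is connected.

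I expect the main obstacle to be in Claim (ii): making the correspondence between $(2,2)$-isogenies of superspecial principally polarized surfaces and $2$-neighbours of quaternionic-Hermitian lattices genuinely precise --- matching the $15$ Lagrangians in $A[2]$ with the relevant neighbour lattices, checking that isomorphism of principally polarized surfaces corresponds exactly to ${\rm GL}_2(\mathcal{O})$-equivalence, and verifying the local situation at $2$ --- and then invoking strong approximation at the correct place. A more hands-on alternative, closer to the explicit Richelot computations used elsewhere in this paper, would be to prove directly that the Richelot orbit of any superspecial ${\rm Jac}(C)$ eventually reaches a curve $C'$ with ${\rm RA}(C') \supseteq {\rm C}_2$, which is $(2,2)$-isogenous to a product of two supersingular elliptic curves by Proposition \ref{classification}(2) and the discussion following it; the difficulty there is ruling out a hypothetical connected component consisting entirely of indecomposable Jacobians, which in the end again rests on a counting (mass-formula) input.
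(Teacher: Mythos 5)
The first thing to say is that the paper contains no proof of Theorem~\ref{connected} to compare you against: it is quoted verbatim from Jordan--Zaytman \cite{JZ}, and the author simply uses it as a black box to justify Algorithm~\ref{Algo} and the inductive step in the proof of Main Theorem~B. Measured instead against the actual argument in \cite{JZ}, the mechanism you propose for the essential step is the right one and is in substance theirs: identify superspecial principally polarized abelian surfaces with the classes in a genus of rank-$2$ quaternion Hermitian $\mathcal{O}$-lattices, identify $(2,2)$-isogenies with $2$-neighbours, and deduce connectedness of the neighbour graph from strong approximation for the simply connected quaternionic unitary group, which is compact at $\infty$ and split (hence noncompact) at $2$. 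You are also candid about where the work lies; as written, Claim (ii) is a program rather than a proof until the dictionary between the $15$ Lagrangians of $A[2]$ and the $2$-neighbour lattices is pinned down, isomorphism of polarized surfaces is matched with the correct equivalence of lattices, and the local transitivity at $2$ is actually verified. Note also that once that lattice argument is complete it proves connectedness of all of $\mathcal{G}_p$ in one stroke, so the split into Claims (i) and (ii) is organizational rather than logical.

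The one concrete flaw is in Claim (i). The $15$ Lagrangians of $(E_1\times E_2)[2]$ are the nine subgroups $\langle P_1\rangle\times\langle P_2\rangle$ together with the six graphs of isometries $E_1[2]\to E_2[2]$; the subgroup $E_1[2]\times\{0\}$ is \emph{not} isotropic, since the Weil pairing is nondegenerate on it. Consequently every product-type edge you construct moves \emph{both} elliptic factors by one step of the elliptic $2$-isogeny graph, so the subgraph you are walking in is a tensor (categorical) product of that graph with itself. Your padding device (a $2$-isogeny followed by its dual) changes a walk length by $2$, so it only reconciles walks of lengths of equal parity; connectedness of $\mathcal{D}$ by this route additionally requires the supersingular $2$-isogeny graph to be non-bipartite. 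That is true --- a connected $3$-regular bipartite graph would force $-3$ to be an eigenvalue of the Brandt matrix $B(2)$, which is excluded by the Ramanujan bound $|a_2|\le 2\sqrt{2}$ for weight-$2$ eigenforms on $\Gamma_0(p)$ --- but it is a genuine extra input that your sketch does not supply. Since Claim (i) is in any case subsumed by the strong-approximation argument, this gap does not doom the overall strategy, but the step is incomplete as stated.
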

\noindent Here, the superspecial $(2,2)$-isogeny graph $\mathcal{G}_p$ is defined as follows:
The vertices of $\mathcal{G}_p$ are isomorphism classes of superspecial principally polarized abelian surfaces defined over $\mathbb{F}_{p^2}$.
The edges of $\mathcal{G}_p$ are isomorphism classes of $(2,2)$-isogenies between superspecial principally polarized abelian surfaces.
Then, thanks to Theorem \ref{connected}, we can enumerate all superspecial genus-2 curves using the following algorithm (cf. \cite[Algorithm 7.1]{KHH}).
\newpage
\begin{algorithm}
\caption{\ Calculating superspecial genus-2 curves using Richelot isogenies.}
\begin{algorithmic}[1]\label{Algo}
\REQUIRE A rational prime $p \geq 7$.
\ENSURE A list $\mathcal{L}$ of all superspecial genus-2 curves over $\mathbb{F}_{p^2}$.
\STATE Compute the set ${\rm SsgEll}(p^2)$ of $\mathbb{F}_{p^2}$-isomorphism classes of supersingular elliptic curves over $\mathbb{F}_{p^2}$.
\STATE Set $\mathcal{L} \leftarrow \emptyset$.
\STATE For each pair $(E,E')$ of elements in ${\rm SsgEll}(p^2)$, compute the curves $C$ whose Jacobians are $(2,2)$-isogenous to $E \times E'$ (see \cite[Section\,3]{HLP}). If $C$ is not isomorphic to an element of $\mathcal{L}$, then adjoin it to $\mathcal{L}$.
\STATE Write $\mathcal{L} = \{C_1,\ldots,C_n\}$, and set $i \leftarrow 1$.
\STATE Compute the genus-2 curves $C'$ which are Richelot isogenous to $C_i$. If $C'$ is not isomorphic to an element of $\mathcal{L}$, then set $N \leftarrow \#\mathcal{L},\ C_{N+1} \leftarrow C'$ and adjoin it to $\mathcal{L}$.
\STATE If $i < \#\mathcal{L}$, then set $i \leftarrow i+1$ and go back to Step\,5.
\RETURN $\mathcal{L}$.
\end{algorithmic}
\end{algorithm}\vspace{-3mm}
\begin{Rmk}
As mentioned in Subsection \ref{RA}, the Jacobian variety of genus-2 curve $C$ is $(2,2)$-isogenous to the product of two elliptic curves if and only if ${\rm RA}(C) \supset {\rm C}_2$.
Hence, we see that all the curves $C$ generated in Step 3 satisfy ${\rm RA}(C) \supset {\rm C}_2$.
\end{Rmk}

Next, let us review how to compute the genus-2 curve which is Richelot isogenous to a given genus-2 curve (see \cite[Section 3.2]{CDS} for details).
\begin{Def}
Let $f(X) \in K[X]$ be a separable polynomial of degree $5$ or $6$. Then, a {\it quadratic splitting} of $f(X)$ is a set $\hspace{-0.2mm}\{G_1,G_2,G_3\} \subset\hspace{-0.2mm} \sbar{K}[x]$ of three monic polynomials of degree 1 or 2 such that $G_1G_2G_3 = f(X)$.
\end{Def}
Consider the genus-2 curve
\[
    C: Y^2 = (X-a_1)(X-a_2)(X-a_3)(X-a_4)(X-a_5)(X-a_6) =: f(X)
\]
where each $a_i$ is a distinct element of $ \sbar{K} \cup \{\infty\}$.
For $i \in \{1,\ldots,6\}$, we put $P_i := \infty$ if $a_i = \infty$ and $P_i := (a_i,0)$ otherwise.
Then, all 2-torsion points on $J := {\rm Jac}(C)$ are written as
\[
    D_{i,j} := [P_i] - [P_j]\ \,{\rm with}\ \,i < j.
\]
Now, we can construct the Richelot isogeny $\pi: J \rightarrow J/G$ with kernel $G = \{0,D_{1,2},D_{3,4},D_{5,6}\} \cong \mathbb{Z}/2\mathbb{Z} \times \mathbb{Z}/2\mathbb{Z}$ as follows.
\begin{Prop}\label{formula}
With the notations as above, let $\{G_1,G_2,G_3\}$ be a quadratic splitting of $f(X)$ with
\[
    \left\{
    \begin{array}{l}
        G_1 = g_{1,2}X^2 + g_{1,1}X + g_{1,0} = (X-a_1)(X-a_2),\\[1mm]
        G_2 = g_{2,2}X^2 + g_{2,1}X + g_{2,0} = (X-a_3)(X-a_4),\\[1mm]
        G_3 = g_{3,2}X^2 + g_{3,1}X + g_{3,0} = (X-a_5)(X-a_6).
    \end{array}
    \right.
\]
Remark that if $\deg{f(X)} = 5$, then one of $\{a_1,\ldots,a_6\}$ becomes $\infty$ and one of $\{g_{1,2},g_{2,2},g_{3,2}\}$ is equal to zero.
Setting
\[
    \delta := \det\hspace{-0.3mm}\begin{pmatrix}
        g_{1,2} & g_{1,1} & g_{1,0}\\
        g_{2,2} & g_{2,1} & g_{2,0}\\
        g_{3,2} & g_{3,1} & g_{3,0}
    \end{pmatrix}\!,
\]
we have the following statements:
\begin{enumerate}
\item If $\delta \neq 0$, then $J/G$ is isomorphic to the Jacobian of the genus-2 curve $C': y^2 = \delta^{-1}H_1H_2H_3$ where
    \[
        H_1 := {G'}_{\!2}G_3-G_2{G'}_{\!3}, \quad H_2 := {G'}_{\!3}G_1-G_3{G'}_{\!1}, \quad H_3 := {G'}_{\!1}G_2-G_1{G'}_{\!2}.
    \]
Moreover, if all 2-torsion points on $J$ are defined over $K$, then any isotropic subgroup of $J[2]$ is defined over $K$.
Hence, the isogeny is also defined over $K$.
\item If $\delta = 0$, then $J/G$ is isomorphic to a product of two elliptic curves.
\end{enumerate}
\end{Prop}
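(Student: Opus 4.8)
The plan is to realise the Richelot isogeny $\pi\colon J\to J/G$ of Subsection~\ref{Richelot} by an explicit correspondence and then read off its target; this is the classical construction of Richelot (compare \cite[Section~3.2]{CDS}). First I would record two formal properties of the polynomials $H_i$ that underlie the construction: the cubic terms of $G_j'G_k$ and $G_jG_k'$ cancel, so each $H_i$ has degree at most $2$; and $G_1H_1+G_2H_2+G_3H_3=0$ as a polynomial identity. Assuming $\delta\neq 0$, set $C'\colon y^2=\delta^{-1}H_1H_2H_3$ and consider the Richelot correspondence $\mathcal{R}\subset C\times C'$, cut out by a bilinear equation in the values $G_i(x_1)$ and $H_i(x_2)$ together with the compatibility relation it forces on $y_1y_2$. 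One checks that $\mathcal{R}$ is irreducible of the correct bidegree, so both projections $\mathcal{R}\to C$ and $\mathcal{R}\to C'$ are finite of degree $2$, and by Picard functoriality $\mathcal{R}$ induces a homomorphism $\psi\colon J\to{\rm Jac}(C')$.

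Next I would show that $\psi$ is a $(2,2)$-isogeny with kernel exactly $G=\{0,D_{1,2},D_{3,4},D_{5,6}\}$. The inclusion $G\subseteq\ker\psi$ is geometric: the two roots of $G_1$ are the $x$-coordinates of the Weierstrass points $P_1,P_2$, and these lie in a common fibre of $\mathcal{R}$ whose image on $C'$ is the zero set of $H_1$ (a pair of Weierstrass points of $C'$), so $D_{1,2}=[P_1]-[P_2]\mapsto 0$ under $\psi$, and likewise for $D_{3,4}$ and $D_{5,6}$. To bound the degree I would use the involutivity of Richelot's construction --- applied, after normalisation, to the quadratic splitting $\{H_1,H_2,H_3\}$ it recovers $C$ --- so that $\psi^{\vee}\circ\psi=[2]_{J}$; since $\deg\psi^{\vee}=\deg\psi$ this gives $(\deg\psi)^{2}=\deg[2]_{J}=16$, hence $\deg\psi=4=\#G$ and therefore $\ker\psi=G$ and $J/G\cong{\rm Jac}(C')$. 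Equivalently, this can be phrased through the compatibility $\psi^{*}C'=2C$ of Subsection~\ref{Richelot} (the normalising scalar $\delta^{-1}$ is forced by this), which with $C^{2}=2$ gives $(\psi^{*}C')^{2}=8=2\deg\psi$; the same identity shows that the principal polarization of ${\rm Jac}(C')$ is the one induced on $J/G$ by $\varphi_{2C}$, so the isomorphism is one of principally polarized abelian varieties.

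For the degenerate case $\delta=0$, the rows of the displayed coefficient matrix are linearly dependent, i.e.\ $c_1G_1+c_2G_2+c_3G_3=0$ for some non-trivial $(c_1,c_2,c_3)$; this is exactly the condition under which $y^2=H_1H_2H_3$ fails to define a smooth genus-$2$ curve, so $J/G$ cannot be a Jacobian, and since a principally polarized abelian surface which is not a Jacobian is a polarized product of two elliptic curves, $J/G\cong E\times E'$. Here one can moreover produce the two elliptic factors explicitly by using the relation $c_1G_1+c_2G_2+c_3G_3=0$ to simplify the quotient, in the spirit of Lemma~\ref{decomposed}. Finally, for the field-of-definition claim: if every point of $J[2]$ is $K$-rational then ${\rm Gal}(\sbar K/K)$ acts trivially on $J[2]$, so every subgroup --- in particular every isotropic subgroup --- is defined over $K$; moreover all the $D_{i,j}$ being $K$-rational forces each unordered pair of Weierstrass points, hence each $a_i$, to lie in $K\cup\{\infty\}$, so that $G_1,G_2,G_3,\delta$ and the $H_i$ all lie in $K[X]$ (resp.\ $K$), and therefore $C'$, $\mathcal{R}$ and $\psi$ are defined over $K$ --- recovering the descent statement recalled in Subsection~\ref{Richelot}.

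The step I expect to be the main obstacle is the degree/kernel computation in the second paragraph: one must check that the explicit correspondence $\mathcal{R}$ really defines an everywhere-regular isogeny (no spurious components, correct bidegree) and that its kernel is exactly $G$ and not some larger $2$-torsion subgroup. The cleanest way around this bookkeeping is the involutivity argument, which reduces everything to the single identity $\psi^{\vee}\circ\psi=[2]_{J}$ plus a comparison of degrees.
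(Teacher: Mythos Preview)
The paper does not give its own proof of this proposition: its entire argument is the single line ``See \cite[Chapter 8]{Smith}.'' So there is nothing in the paper to compare your proposal against beyond the implicit claim that Smith's thesis contains the details.

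Your sketch is a correct outline of the classical Richelot construction, and it is essentially what one finds in Smith (and in \cite[Section~3.2]{CDS}, which you cite). The two formal identities you record --- $\deg H_i\le 2$ and $G_1H_1+G_2H_2+G_3H_3=0$ --- are exactly the ones that make the correspondence $\mathcal{R}$ work, and the involutivity argument $\psi^\vee\circ\psi=[2]_J$ is the standard way to pin down $\deg\psi=4$ without tracking components of $\mathcal{R}$ by hand. The degenerate case $\delta=0$ is also handled correctly: a principally polarized abelian surface that is not a Jacobian must be a polarized product of elliptic curves. For the field-of-definition claim, your reasoning is fine; note that from $a_i+a_j\in K$ for all pairs one gets $2a_i=(a_i+a_j)+(a_i+a_k)-(a_j+a_k)\in K$, hence $a_i\in K$ since $\mathrm{char}\,K\neq 2$, so the $G_i$, $\delta$, and $H_i$ really do lie in $K[X]$ (resp.\ $K$).

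The one place where your sketch is genuinely thin is the point you yourself flag: you assert but do not verify that when $\delta\neq 0$ the sextic $\delta^{-1}H_1H_2H_3$ is separable (so that $C'$ is a smooth genus-$2$ curve), and you do not write down $\mathcal{R}$ explicitly enough to check irreducibility and the absence of base points. These are exactly the computations that fill Chapter~8 of \cite{Smith}; your proposal correctly identifies them as the crux and proposes the right workaround (involutivity), so as a proof outline it is sound.
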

\begin{proof}
See \cite[Chapter 8]{Smith}.
\end{proof}

\newpage
\section{Proof of Main Theorems}
In this section, we show our Main Theorems stated in Section 1 (specifically, the proofs of Main Theorem A and B are given in Subsection 3.1 and 3.2, respectively). We use the same notations as in previous sections, and we consider all curves over a field of characteristic $p \geq 3$.

\subsection{Proof of Main Theorem A}\label{A}
First of all, we show the following proposition (a partial result of Main Theorem A).
\begin{Prop}\label{lie}
Assume that the genus-2 curve
\[
    C: y^2 = x(x-1)(x-\lambda)(x-\mu)(x-\nu)
\]
is superspecial. Then $\lambda,\mu$ and $\nu$ belong to $\mathbb{F}_{p^2}$.
\end{Prop}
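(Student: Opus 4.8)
The plan is to reduce the proposition to producing one well-chosen model of $C$ over $\mathbb{F}_{p^2}$, and then to extract the Rosenhain invariants from it by a cross-ratio argument. First I claim it suffices to exhibit a genus-$2$ curve $C_0$ over $\mathbb{F}_{p^2}$, all six of whose Weierstrass points are $\mathbb{F}_{p^2}$-rational, together with an isomorphism $C\cong C_0$ over $\overline{\mathbb{F}}_p$. Indeed, any isomorphism of genus-$2$ curves is induced by a Möbius transformation $T\in\mathrm{PGL}_2(\overline{\mathbb{F}}_p)$ of the hyperelliptic $\mathbb{P}^1$, carrying the Weierstrass set $\{0,1,\infty,\lambda,\mu,\nu\}$ of $C$ bijectively onto that of $C_0$. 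Then $T(0),T(1),T(\infty)$ lie in $\mathbb{F}_{p^2}$, so $T$ — and hence $T^{-1}$ — is defined over $\mathbb{F}_{p^2}$; since $T(\lambda),T(\mu),T(\nu)$ are Weierstrass points of $C_0$ they also lie in $\mathbb{F}_{p^2}$, and therefore $\lambda=T^{-1}(T(\lambda))$, $\mu$, $\nu$ all lie in $\mathbb{F}_{p^2}$.

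To build such a $C_0$ I would use the structure of superspecial abelian surfaces. Superspeciality gives $\mathrm{Jac}(C)\cong E_0^{2}$ over $\overline{\mathbb{F}}_p$, and by uniqueness (up to isomorphism) of the superspecial abelian surface one may take $E_0$ to be a supersingular elliptic curve defined over $\mathbb{F}_{p^2}$ with $\mathbb{F}_{p^2}$-Frobenius $\pi_{E_0}=-p$, i.e.\ a maximal curve. On $A:=E_0^{2}$ over $\mathbb{F}_{p^2}$ the Frobenius is then the central element $-p$, so the Galois group acts trivially on $\mathrm{Hom}_{\overline{\mathbb{F}}_p}(A,A^{\vee})$; hence the whole Néron–Severi group of $A$, and in particular the principal polarization $\theta$ obtained by transporting that of $\mathrm{Jac}(C)$ along the chosen isomorphism, is already defined over $\mathbb{F}_{p^2}$. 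Thus $(A,\theta)$ is an indecomposable principally polarized abelian surface over $\mathbb{F}_{p^2}$ with Frobenius $-p$, and by the Torelli theorem (which in dimension two realizes an indecomposable principally polarized abelian surface over a field $k$ as the Jacobian of a genus-$2$ curve over $k$) it equals $\mathrm{Jac}(C_0)$ for a genus-$2$ curve $C_0$ over $\mathbb{F}_{p^2}$; base-changing to $\overline{\mathbb{F}}_p$ gives $\mathrm{Jac}(C_0)\cong\mathrm{Jac}(C)$ as principally polarized varieties, whence $C_0\cong C$ over $\overline{\mathbb{F}}_p$.

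Finally, the Weierstrass points of $C_0$ are $\mathbb{F}_{p^2}$-rational: the $\mathbb{F}_{p^2}$-Frobenius acts on $\mathrm{Jac}(C_0)$ as $-p\equiv 1\pmod 2$, hence trivially on $\mathrm{Jac}(C_0)[2]$; and the permutation action of Galois on the Galois-stable six-element set of Weierstrass points injects into its action on $\mathrm{Jac}(C_0)[2]\cong\mathbb{F}_2^{4}$ via the exceptional isomorphism $S_6\cong\mathrm{Sp}_4(\mathbb{F}_2)$, so Frobenius fixes each Weierstrass point. By the first paragraph this yields $\lambda,\mu,\nu\in\mathbb{F}_{p^2}$.

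A variant staying closer to Section~\ref{Preliminaries}: by connectedness of the superspecial $(2,2)$-isogeny graph (Theorem~\ref{connected}), $\mathrm{Jac}(C)$ is reached from a product of maximal supersingular elliptic curves by $\mathbb{F}_{p^2}$-rational $(2,2)$-isogenies, and an $\mathbb{F}_{p^2}$-rational isogeny out of an abelian surface with Frobenius $-p$ again lands on one with Frobenius $-p$; so (using Proposition~\ref{formula}) every superspecial genus-$2$ curve has a model over $\mathbb{F}_{p^2}$ with totally $\mathbb{F}_{p^2}$-rational $2$-torsion, i.e.\ with all Weierstrass points rational, and one concludes as above. I expect the only genuine subtlety to be the descent of the principal polarization together with the field-of-definition form of Torelli's theorem; once a maximal model over $\mathbb{F}_{p^2}$ is in hand, the rest is formal.
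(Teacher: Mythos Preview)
Your argument is correct and is genuinely different from the paper's. The paper proceeds by a case split on ${\rm RA}(C)$: for ${\rm RA}(C)\cong{\rm C}_5$ it checks directly that a primitive fifth root of unity lies in $\mathbb{F}_{p^2}$ when $p\equiv 4\pmod 5$; for ${\rm RA}(C)\supset{\rm C}_2$ it uses the two elliptic quotients of Lemma~\ref{decomposed} together with Auer--Top to show that the parameters $a,b$ in the normal form are fourth powers in $\mathbb{F}_{p^2}$; and only in the remaining case ${\rm RA}(C)=\{1\}$ does it invoke Ekedahl's descent to a maximal model with $F^2=-p$, extract $a_i-a_j\in\mathbb{F}_{p^2}$ from the Mumford representations of the $2$-torsion, and finish with a Galois-cohomology count of the $\mathbb{F}_{p^2}$-forms (using that ${\rm Aut}(C')=\langle\iota\rangle$). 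Your proof collapses all three cases into one: descend the principal polarization to $E_0^2$ with Frobenius $-p$, apply the field-of-definition Torelli theorem for hyperelliptic curves to obtain $C_0$ over $\mathbb{F}_{p^2}$, use $-p\equiv 1\pmod 2$ together with $S_6\cong{\rm Sp}_4(\mathbb{F}_2)$ to force all six Weierstrass points to be $\mathbb{F}_{p^2}$-rational, and conclude by the cross-ratio observation. This is cleaner and sidesteps the twist bookkeeping entirely. The trade-off is that the paper's explicit handling of the case ${\rm RA}(C)\supset{\rm C}_2$ produces the sharper statement that $a$ and $b$ are squares (indeed fourth powers) in $\mathbb{F}_{p^2}$, a fact that is quoted verbatim later in the proofs of Lemma~\ref{maxminC2} and Proposition~\ref{C2max} on the way to Main Theorem~B; your uniform route would require that piece to be supplied separately.
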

\begin{proof}
We divide into three cases by the reduced automorphism group of $C$ as in Proposition \ref{classification}.\vspace{-1mm}
\begin{enumerate}
\item The case that ${\rm RA}(C) \cong {\rm C}_5$: Recall that the curve $C$ is isomorphic to
\[
    C: y^2 = x^5 - 1 = (x-1)(x-\zeta)(x-\zeta^2)(x-\zeta^3)(x-\zeta^4),
\]
where $\zeta$ denotes a primitive fifth root of unity. It suffices to show that $\zeta$ is an element of $\mathbb{F}_{p^2}\!$ when $C$ is superspecial, since all Rosenhain invariants of $C$ can be written as a fractional expression of $\zeta$. It is well-known \cite[Proposition 1.13]{IKO} that this curve is superspecial if and only if $p \equiv 4 \pmod{5}$, and hence one can check that $\zeta^{p^2}\! = \zeta$ when $p \equiv 4 \pmod{5}$.\vspace{-1mm}
\item The case that ${\rm RA}(C) \supset {\rm C}_2$: Recall from Proposition \ref{classification} that $C$ is isomorphic to
\[
    C: y^2 = (x^2-1)(x^2-a)(x^2-b),
\]
where $a$ and $b$ belong to the algebraic closure of ${\mathbb F}_{p^2}$. Using Lemma \ref{decomposed}, there exist $t_1$ and $t_2$ such that
\[
    a = \frac{t_1}{t_2} \cdot \frac{1-t_2}{1-t_1}, \quad b = \frac{t_1}{t_2},
\]
and two elliptic curves $E_i: v^2 = u(u-1)(u-t_i)$ are supersingular. Here, it is known \cite[Proposition\,3.1]{AT} that $t_i$ and $1-t_i$ for $i = 1,2$ are fourth powers in $\mathbb{F}_{p^2}$. Thereby, we obtain that $a$ and $b$ are also fourth powers in $\mathbb{F}_{p^2}$. This implies that we can write
\[
    C: y^2 = (x-1)(x+1)(x-\!\sqrt{a})(x+\!\sqrt{a})(x-\!\sqrt{b})(x+\!\sqrt{b})
\]
with $\!\sqrt{a},\sqrt{b} \in \mathbb{F}_{p^2}$. Then $\lambda,\mu$ and $\nu$ obtained by transforming to a Rosenhain form of $C$ belong to $\mathbb{F}_{p^2}$, as mentioned in Subsection\,\ref{Rosenhain}.\vspace{-1mm}
\item The case that ${\rm RA}(C) = \{1\}$: It is well-known \cite[p.\,166]{Ekedahl} that $C$ descends to a maximal curve $C'$ defined over $\mathbb{F}_{p^2}$, where the square $F^2$ of the Frobenius map $F$ is equal to $-p$. Let $C'$ be written as
\[
    C': Y^2 = \kappa (X-a_1)(X-a_2)(X-a_3)(X-a_4)(X-a_5)(X-a_6) =: f(X)
\]
where $f(X) \in \mathbb{F}_{p^2}[X]$ is a square-free polynomial of degree $6$ and $\kappa$ belongs to $\mathbb{F}_{p^2}$. Then $P_i := (a_i,0)$ is a Weierstrass point of $C$, and moreover $D_{i,j} := [P_i] - [P_j]$ with $i \neq j$ is a 2-torsion point on $J := {\rm Jac}(C)$. In the following, we show that $a_i -a_j \in \mathbb{F}_{p^2}\hspace{-0.3mm}$ for all $i,j \in \{1,\ldots,6\}$. Indeed, we can choose $k \in \{1,\ldots,6\}$ such that $k \neq i$ and $k \neq j$. Then $D_{i,k} \in J$ is defined over $\mathbb{F}_{p^2}$ since $F^2D_{i,k} = -pD_{i,k} = D_{i,k}$. Here, the Mumford representation (cf. \cite[Section 13]{Washington}) for $D_{i,k}$ is given as $(u_{i,k},0)$ with
\[
    u_{i,k} := (t-a_i)(t-a_k) = t^2 - (a_i+a_k)t + a_ia_k \in \mathbb{F}_{p^2}[t],
\]
and hence $a_i + a_k$ belongs to $\mathbb{F}_{p^2}$. Since $a_j + a_k$ belongs to $\mathbb{F}_{p^2}\hspace{-0.3mm}$ similarly, we obtain that $a_i-a_j \in \mathbb{F}_{p^2}$. Then, as studied in Subsection \ref{Rosenhain}, for all Rosenhain forms
\[
    C': \kappa' y^2 = x(x-1)(x-\lambda)(x-\mu)(x-\nu), \quad \kappa' \in \mathbb{F}_{p^2},
\]
we see that $\lambda,\mu$ and $\nu$ also belong to $\mathbb{F}_{p^2}$ since these are obtained as $a_i-a_j$'s quotients. Now, we denote by $G := {\rm Gal}(\mbar{{\mathbb F}_{p^2}\!}/\mathbb{F}_{p^2}\hspace{-0.3mm})$, then it is known \cite[Section 4]{Serre} that there is a bijection from the set of $\mathbb{F}_{p^2}$-forms of $C'$ to $H^1(G,{\rm Aut}(C'))$, which is isomorphic to $\mathbb{Z}/2\mathbb{Z}$ by the assumption. Hence $C$ is isomorphic to
\[
    \kappa'y^2 = x(x-1)(x-\lambda)(x-\mu)(x-\nu)
\]
for $\kappa'=1$ or $\varepsilon$ with a non-square element $\varepsilon$ in $\mathbb{F}_{p^2}$. In any case, all 
$\lambda,\mu$ and $\nu$ belong to $\mathbb{F}_{p^2}\hspace{-0.3mm}$ as desired.
\end{enumerate}
Therefore, the proof is done.
\end{proof}

Next, given a Rosenhain form of a superspecial genus-2 curve $C$, we compute Rosenhain forms of $C'$ which are Richelot isogenous to $C$, according to Proposition \ref{formula}. We consider the superspecial genus-2 curve
\[
    C: Y^2 = (X-a_1)(X-a_2)(X-a_3)(X-a_4)(X-a_5)
\]
with $\{a_1,a_2,a_3,a_4,a_5\} = \{0,1,\lambda,\mu,\nu\}$ and a quadratic splitting\vspace{-0.5mm}
\begin{align*}
    G_1 &:= X-a_1,\\
    G_2 &:= (X-a_2)(X-a_3),\\
    G_3 &:= (X-a_4)(X-a_5).
\end{align*}
We remark that all $a_i$ belong to $\mathbb{F}_{p^2}\!$ by Proposition \ref{lie}. Moreover, we define the following three values\vspace{-0.5mm}
\begin{align}\label{discriminant}
    D_1 &:= (a_2-a_4)(a_2-a_5)(a_3-a_4)(a_3-a_5),\nonumber\\
    D_2 &:= (a_1-a_4)(a_1-a_5),\\
    D_3 &:= (a_1-a_2)(a_1-a_3),\nonumber\\[-6.5mm]\nonumber
\end{align}
then we can compute\vspace{-0.5mm}
\[
    \delta = \det\!\begin{pmatrix}
        0 & 1 & -a_1\\
        1 & -a_2-a_3 & a_2a_3\\
        1 & -a_4-a_5 & a_4a_5
    \end{pmatrix} = -a_1a_2 - a_1a_3 + a_1a_4 + a_1a_5 + a_2a_3 - a_4a_5 = -(D_2 - D_3).\vspace{0.5mm}
\]
In the following, we choose $\!\sqrt{D_1},\!\sqrt{D_2}$ and $\!\sqrt{D_3}$ of a square root of $D_1,D_2$ and $D_3$ (these values are defined to be elements of $\mbar{{\mathbb F}_{p^2}}$, but they will turn out to be elements of $\mathbb{F}_{p^2}\hspace{-0.3mm}$). Then, three polynomials $H_1,H_2$ and $H_3$ defined in Proposition \ref{formula} can be calculated as
\begin{align*}
    H_1 &= (a_2+a_3-a_4-a_5)X^2 - 2(a_2a_3 - a_4a_5)X + a_2a_3a_4 + a_2a_3a_5 - a_2a_4a_5 - a_3a_4a_5\\
    &= (a_2+a_3-a_4-a_5)(X-\alpha_1)(X-\alpha_2),\\
    H_2 &= x^2 - 2a_1x + a_1a_4 + a_1a_5 - a_4a_5 = (X-\beta_1)(X-\beta_2),\\
    H_3 &= -x^2 + 2a_1x - a_1a_2 - a_1a_3 + a_2a_3 = -(X-\gamma_1)(X-\gamma_2),\\[-6mm]
\end{align*}
where we define $\alpha_i,\beta_i$ and $\gamma_i$ for $i \in \{1,2\}$ to be elements of $\mbar{{\mathbb F}_{p^2}}$ as follows:\vspace{-0.5mm}
\begin{align*}
    \alpha_1 = \frac{(a_2a_3 - a_4a_5) + \hspace{-0.3mm}\sqrt{D_1}}{a_2+a_3-a_4-a_5}, &\quad \alpha_2 = \frac{(a_2a_3 - a_4a_5) - \hspace{-0.3mm}\sqrt{D_1}}{a_2+a_3-a_4-a_5},\nonumber\\
    \beta_1 = a_1 + \hspace{-0.3mm}\sqrt{D_2}, &\quad \beta_2 = a_1 - \hspace{-0.3mm}\sqrt{D_2},\\
    \gamma_1 = a_1 + \hspace{-0.3mm}\sqrt{D_3}, &\quad \gamma_2 = a_1 - \hspace{-0.3mm}\sqrt{D_3}.\nonumber\\[-5.5mm]
\end{align*}
As mentioned in Subsection \ref{Richelot}, the genus-2 curve
\begin{align}\label{Cdash}
    C': y^2 &= \delta^{-1}H_1H_2H_3\nonumber\\
    &= c(X-\alpha_1)(X-\alpha_2)(X-\beta_1)(X-\beta_2)(X-\gamma_1)(X-\gamma_2), \quad c := -(a_2+a_3-a_4-a_5)\delta^{-1}
\end{align}
is Richelot isogenous to $C$, and hence $C'$ is also superspecial by the assumption. The transformation
\[
    X \mapsto \frac{X-\gamma_1}{X-\gamma_2} \cdot \frac{\beta_1-\gamma_2}{\beta_1-\gamma_1} =: x, \ \, Y \mapsto \frac{c^{-1}Y}{(\gamma_2-\gamma_1)(\gamma_2-\beta_1)(\gamma_2-\beta_2)(\gamma_2-\alpha_1)(\gamma_2-\alpha_2)}\biggl(\frac{\gamma_2-\gamma_1}{X-\gamma_2} \cdot \frac{\beta_1-\gamma_2}{\beta_1-\gamma_1}\biggr)^{\!\!3} =: y
\]
gives a Rosenhain form\vspace{1mm}
\[
    C': \kappa y^2 = x(x-1)(x-\lambda')(x-\mu')(x-\nu')
\]
with\vspace{-1.5mm}
\begin{align*}
    \lambda' &:= \biggl(\frac{\sqrt{D_2}+\!\sqrt{D_3}}{\sqrt{D_2}-\!\sqrt{D_3}}\biggr)^{\!\!2} = \frac{(D_2+D_3) + 2\sqrt{D_2D_3}}{(D_2+D_3) - 2\sqrt{D_2D_3}},\\
    \mu' &:= \frac{(a_2a_3 - a_4a_5) + \sqrt{D_1} - (a_2+a_3-a_4-a_5)\bigl(a_1+\!\sqrt{D_3}\bigr)}{(a_2a_3 - a_4a_5) + \sqrt{D_1} - (a_2+a_3-a_4-a_5)\bigl(a_1-\!\sqrt{D_3}\bigr)} \cdot \frac{\sqrt{D_2}+\!\sqrt{D_3}}{\sqrt{D_2}-\!\sqrt{D_3}},\\
    \nu' &:= \frac{(a_2a_3 - a_4a_5) - \sqrt{D_1} - (a_2+a_3-a_4-a_5)\bigl(a_1+\!\sqrt{D_3}\bigr)}{(a_2a_3 - a_4a_5) - \sqrt{D_1} - (a_2+a_3-a_4-a_5)\bigl(a_1-\!\sqrt{D_3}\bigr)} \cdot \frac{\sqrt{D_2}+\!\sqrt{D_3}}{\sqrt{D_2}-\!\sqrt{D_3}}
\end{align*}
and
\[
    \kappa := -\delta^{-1}(a_2+a_3-a_4-a_5)(\gamma_1-\beta_1)(\gamma_2-\beta_2)(\gamma_2-\alpha_1)(\gamma_2-\alpha_2).
\]
Then $\lambda',\mu'$ and $\nu'$ are elements of $\mathbb{F}_{p^2}\!$ by using Proposition \ref{lie} again. One can check that
\[
    \frac{\lambda'+1}{\lambda'-1} = \frac{D_2+D_3}{2\sqrt{D_2D_3}} \in \mathbb{F}_{p^2},
\]
and hence $D_2D_3$ is a square in $\mathbb{F}_{p^2}$. Hence
\[
    \sqrt{\lambda'} := \frac{\sqrt{D_2}+\!\sqrt{D_3}}{\sqrt{D_2}-\!\sqrt{D_3}} = \frac{(D_2+D_3) + 2\sqrt{D_2D_3}}{D_2-D_3}
\]
is an element of $\mathbb{F}_{p^2}$. Tedious computation shows that
\[
    \sqrt{D_3} = -\frac{D_2 - D_3}{a_2+a_3-a_4-a_5} \cdot \frac{(\sqrt{\lambda'}-\mu')(\sqrt{\lambda'}-\nu')}{\lambda'-\mu'\nu'} \in \mathbb{F}_{p^2},
\]
which implies that $D_3$ is a square in $\mathbb{F}_{p^2}$\,(and hence $D_2$ is also a square in $\mathbb{F}_{p^2}$).
\begin{proof}[\underline{Proof of Main Theorem A\,(1)}]
We can take $(a_1,a_2,a_3)$ arbitrarily so that $\{a_1,a_2,a_3,a_4,a_5\} = \{0,1,\lambda,\mu,\nu\}$. For example if we take $(a_1,a_2,a_3) = (0,1,\lambda)$, then we obtain $D_3 = \lambda$, which turns out to be a square in $\mathbb{F}_{p^2}$ from the above discussion. Similarly we can show that other 8 values are also squares in $\mathbb{F}_{p^2}$.
\end{proof}

Before the proof of the second assertion of Main Theorem A, we show the following lemma:
\begin{Lem}\label{fourth}
With notations as above, three values $\!\sqrt{D_1D_2},\sqrt{D_2D_3}$ and $\sqrt{D_3D_1}$ are all squares in $\mathbb{F}_{p^2}$.
\end{Lem}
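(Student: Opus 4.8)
The plan is to deduce the lemma from Main Theorem A(1), which has just been proved: the key new input is Main Theorem A(1) applied to curves Richelot isogenous to $C$, which in particular tells us that $1-\lambda'$ is a square in $\mathbb{F}_{p^2}$.

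First I would record the consequence of $1-\lambda'$ being a square. From $\lambda' = \bigl((\sqrt{D_2}+\sqrt{D_3})/(\sqrt{D_2}-\sqrt{D_3})\bigr)^{2}$ one obtains
\[
    1-\lambda' \;=\; \frac{-4\sqrt{D_2D_3}}{(\sqrt{D_2}-\sqrt{D_3})^{2}}.
\]
Since $D_2$ and $D_3$ are each a product of two differences $a_i-a_j$, which are squares in $\mathbb{F}_{p^2}$ by Main Theorem A(1), we have $\sqrt{D_2},\sqrt{D_3}\in\mathbb{F}_{p^2}$, so $(\sqrt{D_2}-\sqrt{D_3})^{2}$ is a square; and $-4$ is a square in $\mathbb{F}_{p^2}$ because $-1$ is. As $C'$ is superspecial, Main Theorem A(1) gives that $1-\lambda'$ is a square in $\mathbb{F}_{p^2}$, and solving the displayed relation for $\sqrt{D_2D_3}$ shows that $\sqrt{D_2D_3}$ is a square, i.e.\ $D_2D_3=(a_1-a_2)(a_1-a_3)(a_1-a_4)(a_1-a_5)$ is a fourth power in $\mathbb{F}_{p^2}$. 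Since the labelling $\{a_1,\ldots,a_5\}=\{0,1,\lambda,\mu,\nu\}$ was arbitrary---the only constraint being that the pairing of the four points other than $a_1$ be chosen with $\delta\neq 0$ in Proposition \ref{formula}, which is always possible because a short check shows the three pairings cannot all make $\delta$ vanish---this yields the intermediate claim: for each $a\in\{0,1,\lambda,\mu,\nu\}$, the product $\prod_{b}(a-b)$ over the four points $b\in\{0,1,\lambda,\mu,\nu\}\setminus\{a\}$ is a fourth power in $\mathbb{F}_{p^2}$.

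To finish, I would re-express $D_1D_2$, $D_2D_3$ and $D_3D_1$ through these products. Multiplicativity of the resultant gives $D_2D_3=\prod_{b\neq a_1}(a_1-b)$ together with
\[
    D_1D_2 \;=\; -\,\frac{\prod_{b\neq a_4}(a_4-b)\,\prod_{b\neq a_5}(a_5-b)}{(a_4-a_5)^{2}}, \qquad
    D_1D_3 \;=\; -\,\frac{\prod_{b\neq a_2}(a_2-b)\,\prod_{b\neq a_3}(a_3-b)}{(a_2-a_3)^{2}}.
\]
On the right, each product $\prod_{b\neq a_k}(a_k-b)$ is a fourth power by the intermediate claim; $(a_4-a_5)^{2}$ and $(a_2-a_3)^{2}$ are fourth powers since $a_4-a_5$ and $a_2-a_3$ are squares in $\mathbb{F}_{p^2}$ by Main Theorem A(1); and $-1$ is a fourth power in $\mathbb{F}_{p^2}$ because $8\mid p^{2}-1$. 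Hence $D_1D_2$, $D_2D_3$ and $D_3D_1$ are fourth powers in $\mathbb{F}_{p^2}$, so $\sqrt{D_1D_2}$, $\sqrt{D_2D_3}$ and $\sqrt{D_3D_1}$ are squares in $\mathbb{F}_{p^2}$, which is the lemma.

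The step I expect to require the most care is the parenthetical above: for each choice of the Weierstrass point paired with $\infty$ there is a pairing of the remaining four points with $\delta\neq 0$, so that the associated Richelot-isogenous curve is genuinely a genus-2 curve---necessarily superspecial, being Richelot isogenous to $C$---and Main Theorem A(1) may be applied to its Rosenhain form. Beyond that, the argument is just the resultant identity above together with the fact that $-1$ is a fourth power in $\mathbb{F}_{p^2}$.
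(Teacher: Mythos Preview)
Your proof is correct but follows a different path from the paper's. Both begin the same way: applying Main Theorem~A(1) to the Rosenhain form of the Richelot-isogenous curve $C'$ and reading off from $1-\lambda' = -4\sqrt{D_2D_3}/(\sqrt{D_2}-\sqrt{D_3})^2$ that $\sqrt{D_2D_3}$ is a square. From there the paper stays with the \emph{single} Richelot isogeny and extracts $\sqrt{D_3D_1}$ from the identity
\[
    \mu'-\nu' \;=\; \frac{4\sqrt{D_3D_1}}{(\sqrt{D_2}-\sqrt{D_3})^2\,(2a_1-a_2-a_3-2\sqrt{D_3})},
\]
together with the factorisation $2a_1-a_2-a_3-2\sqrt{D_3}=(b_{12}-b_{13})^2$ where $b_{1k}^2=a_1-a_k$ and $b_{12}b_{13}=\sqrt{D_3}$; the third value $\sqrt{D_1D_2}$ then follows from the other two and $D_3$. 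You instead vary $a_1$ over all five Weierstrass points, deducing from five different Richelot isogenies that each product $\prod_{b\neq a}(a-b)$ is a fourth power, and then recover $D_1D_2$ and $D_1D_3$ via the resultant identities
\[
    D_1D_2=-\frac{\prod_{b\neq a_4}(a_4-b)\,\prod_{b\neq a_5}(a_5-b)}{(a_4-a_5)^2},\qquad
    D_1D_3=-\frac{\prod_{b\neq a_2}(a_2-b)\,\prod_{b\neq a_3}(a_3-b)}{(a_2-a_3)^2},
\]
using that $-1$ is a fourth power in $\mathbb{F}_{p^2}$ since $8\mid p^2-1$. Your route avoids the somewhat opaque simplification of $\mu'-\nu'$ and the ad~hoc factorisation, at the cost of invoking several Richelot isogenies and the parenthetical check that not all three pairings can give $\delta=0$; the paper's route is more economical in that a single isogeny suffices. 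As a bonus, your intermediate claim is exactly Main Theorem~A(2), so your argument in effect proves A(2) before the lemma rather than after it.
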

\begin{proof}
Recall from the first assertion of Main Theorem A that $a_i-a_j$ are all squares for all $i,j \in \{1,\ldots,5\}$, and hence $D_1,D_2$ and $D_3$ in (\ref{discriminant}) are all squares in $\mathbb{F}_{p^2}$. This means that $\!\sqrt{D_1},\sqrt{D}_2$ and $\!\sqrt{D_3}$ are all elements in $\mathbb{F}_{p^2}$. Then, one can compute that\vspace{-2mm}
\begin{align*}
    1-\lambda' = -\frac{4\sqrt{D_2D_3}}{\bigl(\!\sqrt{D_2}-\!\sqrt{D_3}\bigr)^{\!2}},
\end{align*}
and hence $\hspace{-0.3mm}\sqrt{D_2D_3}$ is a square in $\mathbb{F}_{p^2}$. Moreover, tedious computation shows that
\[
    \mu' - \nu' = \frac{4\sqrt{D_3D_1}}{\bigl(\!\sqrt{D_2}-\!\sqrt{D_3}\bigr)^{\!2}\!\bigl(2a_1-a_2-a_3-2\sqrt{D_3}\bigr)}.
\]
Here, we let $b_{12}$ and $b_{13}$ elements of $\mathbb{F}_{p^2}\!$ such that $(b_{12})^2 = a_1 - a_2,\ (b_{13})^2 = a_1 - a_3$ and $b_{12}b_{13} = \!\sqrt{D_3}$, then we obtain $2a_1-a_2-a_3-2\sqrt{D_3} = (b_{12}-b_{13})^2$. This implies that $2a_1-a_2-a_3-2\sqrt{D_3}$ is a square in $\mathbb{F}_{p^2}$, and $\!\sqrt{D_3D_1}$ is a square in $\mathbb{F}_{p^2}$. Since $\!\sqrt{D_1D_2}$ can computed from other two values and $D_3$, hence $\!\sqrt{D_1D_2}$ is also a square in $\mathbb{F}_{p^2}$.
\end{proof}
\begin{proof}[\underline{Proof of Main Theorem A\,(2)}]
It follows from Lemma \ref{fourth} that the value
\[
    D_2D_3 = (a_1-a_2)(a_1-a_3)(a_1-a_4)(a_1-a_5)
\]
is a fourth power in $\mathbb{F}_{p^2}$. For example, if we take $a_1=0$, then we obtain $D_2D_3 = \lambda\mu\nu$, which turns out to be a fourth power in $\mathbb{F}_{p^2}$. Similarly, one can show that other 4 values are also fourth powers in $\mathbb{F}_{p^2}$.
\end{proof}

\subsection{Proof of Main Theorem B}\label{B}
First, we show the following propositions (a partial result of Main Theorem B).
\begin{Lem}\label{maxminC2}
Assume that the genus-2 curve\vspace{-0.5mm}
\[
    C: Y^2 = (X^2-1)(X^2-a)(X^2-b)\vspace{-0.5mm}
\]
is superspecial. Then, the curve $C$ is maximal or minimal over $\mathbb{F}_{p^2}$. Moreover, we have the following:
\begin{itemize}
\item The case of $p \equiv 3 \pmod{4}$: The curve $C$ is maximal over $\mathbb{F}_{p^2}\hspace{-0.3mm}$ if and only if $1-a$ is a square in $\mathbb{F}_{p^2}$.
\item The case of $p \equiv 1 \pmod{4}$: 
The curve $C$ is maximal over $\mathbb{F}_{p^2}\hspace{-0.3mm}$ if and only if $1-a$ is not a square in $\mathbb{F}_{p^2}$.
\end{itemize}
\end{Lem}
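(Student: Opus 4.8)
The plan is to combine the splitting of $\mathrm{Jac}(C)$ obtained in Lemma~\ref{decomposed} with Auer-Top's analysis of supersingular Legendre curves, while keeping careful track of quadratic twists. By Lemma~\ref{decomposed} and \cite[Proposition~3.1]{AT}, there are $t_1,t_2$ with $t_i,1-t_i$ fourth powers in $\mathbb{F}_{p^2}$ and $a=\frac{t_1}{t_2}\cdot\frac{1-t_2}{1-t_1}$, $b=\frac{t_1}{t_2}$; in particular $a,b\in\mathbb{F}_{p^2}$ and $b$ is a square in $\mathbb{F}_{p^2}$, so the involutions $\sigma,\tau$ of Subsection~\ref{RA}, the quotient elliptic curves, and hence the $(2,2)$-isogeny considered in the proof of Lemma~\ref{decomposed} are all defined over $\mathbb{F}_{p^2}$. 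Reading off that proof, this isogeny over $\mathbb{F}_{p^2}$ has the shape
\[
    \phi\colon\ \mathrm{Jac}(C)\ \longrightarrow\ \widetilde{E}_1\times\widetilde{E}_2,
\]
where, by (\ref{E1}) and (\ref{E2}), $\widetilde{E}_1$ and $\widetilde{E}_2$ are the quadratic twists of the Legendre curves $E_i^{\mathrm L}\colon v^2=u(u-1)(u-t_i)$ by $1-a$ and by $b(1-a)$ respectively. Since $b$ is a square in $\mathbb{F}_{p^2}$, the element $b(1-a)$ is a square in $\mathbb{F}_{p^2}$ if and only if $1-a$ is; hence $\widetilde{E}_1\cong E_1^{\mathrm L}$ over $\mathbb{F}_{p^2}$ exactly when $\widetilde{E}_2\cong E_2^{\mathrm L}$ over $\mathbb{F}_{p^2}$, namely exactly when $1-a$ is a square in $\mathbb{F}_{p^2}$.

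Next I would transport point counts through $\phi$. Since isogenous abelian varieties over a finite field have the same zeta function, the characteristic polynomial of the $\mathbb{F}_{p^2}$-Frobenius on $\mathrm{Jac}(C)$ is the product of those of $\widetilde{E}_1$ and $\widetilde{E}_2$, whence
\[
    \#C(\mathbb{F}_{p^2})\ =\ \#\widetilde{E}_1(\mathbb{F}_{p^2})+\#\widetilde{E}_2(\mathbb{F}_{p^2})-(p^2+1).
\]
By \cite[Proposition~2.2]{AT}, each supersingular Legendre curve $E_i^{\mathrm L}$ is maximal over $\mathbb{F}_{p^2}$ when $p\equiv 3\pmod 4$ and minimal over $\mathbb{F}_{p^2}$ when $p\equiv 1\pmod 4$; moreover a quadratic twist over $\mathbb{F}_{p^2}$ interchanges maximality and minimality for an elliptic curve. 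Combining this with the previous paragraph, $\widetilde{E}_i$ is maximal over $\mathbb{F}_{p^2}$ precisely when $p\equiv 3\pmod 4$ and $1-a$ is a square, or $p\equiv 1\pmod 4$ and $1-a$ is a non-square; by the synchronisation noted above this condition is independent of $i$, so $\widetilde{E}_1$ and $\widetilde{E}_2$ are both maximal or both minimal, and $\#\widetilde{E}_i(\mathbb{F}_{p^2})=(p\pm1)^2$. Substituting into the displayed identity yields $\#C(\mathbb{F}_{p^2})=p^2+1\pm4p$, so $C$ is maximal or minimal over $\mathbb{F}_{p^2}$, and $C$ is maximal exactly in the case just described, which is the asserted dichotomy.

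The step I expect to be the main obstacle is the sharp input from \cite[Proposition~2.2]{AT}, that a supersingular Legendre elliptic curve over $\mathbb{F}_{p^2}$ is maximal exactly when $p\equiv 3\pmod 4$. If one has only the weaker statement that such a curve is maximal or minimal, then one must still decide which: for instance by descending to an $\mathbb{F}_p$-rational model of the underlying supersingular curve (which is automatically maximal over $\mathbb{F}_{p^2}$) and comparing it with the Legendre model up to twist, or by comparing the $2$-power torsion of $E(\mathbb{F}_{p^2})\cong(\mathbb{Z}/(p\pm1)\mathbb{Z})^2$ with the $2$-level structure visible on a Legendre model; here the fourth- and eighth-power statements of Auer-Top are exactly what make such a comparison clean. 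The only other delicate point is the bookkeeping of the twisting factor $1-a$ in the proof of Lemma~\ref{decomposed}: one must use that $b$ is a square in $\mathbb{F}_{p^2}$ to conclude that $\widetilde{E}_1$ and $\widetilde{E}_2$ twist in lockstep, which is what rules out the a priori possible mixed case $\#\widetilde{E}_1(\mathbb{F}_{p^2})+\#\widetilde{E}_2(\mathbb{F}_{p^2})=(p+1)^2+(p-1)^2$ and forces $C$ to be maximal or minimal rather than merely superspecial.
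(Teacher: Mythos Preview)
Your proposal is correct and follows essentially the same route as the paper: both arguments use the $(2,2)$-isogeny from Lemma~\ref{decomposed} to the product of the two Legendre curves, observe via Auer--Top that $b$ is a square so the twisting constants $1-a$ and $b(1-a)$ are simultaneously squares or non-squares, and then invoke Auer--Top's maximality criterion for supersingular Legendre curves together with the case split on $p\bmod 4$. Your version is slightly more explicit in writing out the point-count identity $\#C(\mathbb{F}_{p^2})=\#\widetilde{E}_1(\mathbb{F}_{p^2})+\#\widetilde{E}_2(\mathbb{F}_{p^2})-(p^2+1)$, whereas the paper simply asserts that $C$ is maximal (resp.\ minimal) iff both $E_i$ are, but this is the same content.
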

\begin{proof}
Recall from Subsection \ref{RA} that the Jacobian variety of $C$ is $(2,2)$-isogenous to $E_1 \times E_2$ with\vspace{-0.5mm}
\begin{alignat*}{2}
    E_1&: (1-a)v^2 = u(u-1)(u-t_1)  &{\rm \ \,with\ \,}& t_1 := \frac{b-a}{1-a},\\
    E_2&: b(1-a)v^2 = u(u-1)(u-t_2) &{\rm \ \,with\ \,}& t_2 := \frac{b-a}{b(1-a)}.
\end{alignat*}
This isogeny is defined over $\mathbb{F}_{p^2}$. Indeed, recall from (\ref{E1}) and (\ref{E2}) that this is explicitly written by $a$ and $b$, and it follows from the proof of Proposition \ref{lie} that both $a$ and $b$ are squares in $\mathbb{F}_{p^2}\hspace{-0.3mm}$ from the superspeciality of $C$. This fact means that $C$ is maximal (resp. minimal) if and only if both $E_i$ are maximal (resp. minimal).
Here, Auer-Top's result \cite[Lemma\,2.2]{AT} shows that Legendre elliptic curves $y^2 = x(x-1)(x-t)$ are maximal (resp. minimal) over $\mathbb{F}_{p^2}\hspace{-0.3mm}$ if and only if $p \equiv 3$ (resp. $p \equiv 1$). 
Therefore, we divide into two cases depending on whether $p \equiv 3 \pmod{4}$ or $p \equiv 1 \pmod{4}$ in the following.
\begin{itemize}
    \item The case of $p \equiv 3 \pmod{4}$: Two elliptic curves $v^2 = u(u-1)(u-t_i)$ for $i = 1,2$ are maximal over $\mathbb{F}_{p^2}$. Assume that $1-a$ is a square (resp.\ non-square) in $\mathbb{F}_{p^2}$, then $b(1-a)$ is also a square (resp.\ non-square). This implies that both $E_1$ and $E_2$ are maximal (resp.\ minimal) over $\mathbb{F}_{p^2}$, and hence the curve $C$ is also maximal (resp.\ minimal) over $\mathbb{F}_{p^2}$.\vspace{-0.5mm}
    \item The case of $p \equiv 1 \pmod{4}$: Two elliptic curves $v^2 = u(u-1)(u-t_i)$ for $i = 1,2$ are minimal over $\mathbb{F}_{p^2}$. Assume that $1-a$ is a square (resp.\ non-square) in $\mathbb{F}_{p^2}$, then $b(1-a)$ is also a square (resp.\ non-square). This implies that both $E_1$ and $E_2$ are minimal (resp.\ maximal) over $\mathbb{F}_{p^2}$, and hence the curve $C$ is also minimal (resp.\ maximal) over $\mathbb{F}_{p^2}$.
\end{itemize}
Therefore, this lemma is true.
\end{proof}
\begin{Prop}\label{C2max}
Assume that the genus-2 curve
\[
    C: y^2 = x(x-1)(x-\lambda)(x-\mu)(x-\nu)
\]
is superspecial and ${\rm RA}(C) \supset {\rm C}_2$. Then, we have the following:
\begin{itemize}
\item The case of $p \equiv 3 \pmod{4}$: The curve $C$ is maximal over $\mathbb{F}_{p^2}$.
\item The case of $p \equiv 1 \pmod{4}$: The curve $C$ is minimal over $\mathbb{F}_{p^2}$.
\end{itemize}
\end{Prop}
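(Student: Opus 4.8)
The plan is to move the result from the normal form $Y^2=(X^2-1)(X^2-a)(X^2-b)$ to the Rosenhain form via Lemma~\ref{maxminC2}, with the one genuinely delicate point being the bookkeeping of the quadratic twist. First I would invoke Proposition~\ref{classification}\,(2) to get an isomorphism, a priori only over the algebraic closure of $\mathbb{F}_{p^2}$, from $C$ to a curve $C_0\colon Y^2 = (X^2-1)(X^2-a)(X^2-b)$. By Lemma~\ref{decomposed} the superspeciality of $C$ supplies $t_1,t_2$ with $a = \tfrac{t_1}{t_2}\cdot\tfrac{1-t_2}{1-t_1}$, $b=\tfrac{t_1}{t_2}$ and $v^2=u(u-1)(u-t_i)$ supersingular; Auer--Top's \cite[Proposition 3.1]{AT} then makes $t_i$ and $1-t_i$ fourth powers in $\mathbb{F}_{p^2}$, hence so are $a$ and $b$. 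In particular $C_0$ is defined over $\mathbb{F}_{p^2}$ with all six Weierstrass points $\pm1,\pm\!\sqrt a,\pm\!\sqrt b\in\mathbb{F}_{p^2}$, and Lemma~\ref{maxminC2} applies: $C_0$ is maximal over $\mathbb{F}_{p^2}$ precisely when $p\equiv 3\pmod 4$ and $1-a$ is a square, or $p\equiv 1\pmod 4$ and $1-a$ is a non-square.

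Next I would relate $C$ to $C_0$ explicitly. Running the transformation of Subsection~\ref{Rosenhain} on $C_0$ with, say, $(a_1,a_2,a_3)=(1,-1,\sqrt a)$ produces a Rosenhain form $C_{\lambda_0,\mu_0,\nu_0}$ with factor $\kappa_0 = (1+1)(\sqrt a+\sqrt a)(\sqrt a-\sqrt b)(\sqrt a+\sqrt b)=4\sqrt a\,(a-b)$, and $\lambda_0,\mu_0,\nu_0\in\mathbb{F}_{p^2}$ since they are quotients of differences of Weierstrass points of $C_0$. Because $C$ is superspecial, Main Theorem~A\,(1) gives that the nine differences among $\{0,1,\lambda,\mu,\nu\}$ are all squares in $\mathbb{F}_{p^2}$, so Lemma~\ref{alltoone} yields $C=C_{\lambda,\mu,\nu}\cong C_{\lambda_0,\mu_0,\nu_0}$ over $\mathbb{F}_{p^2}$. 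On the other hand, by Lemma~\ref{defined} the curve $C_0$ is isomorphic over $\mathbb{F}_{p^2}$ to $C_{\lambda_0,\mu_0,\nu_0}$ if $\kappa_0$ is a square in $\mathbb{F}_{p^2}$, and otherwise to the non-trivial quadratic twist of $C_{\lambda_0,\mu_0,\nu_0}$, i.e.\ of $C$. Now $4$ and $\sqrt a$ (the latter because $a$ is a fourth power and $-1$ is a square in $\mathbb{F}_{p^2}$) are squares, and $a-b=-t_1(1-a)$ with $t_1$ a square; hence $\kappa_0$ is a square in $\mathbb{F}_{p^2}$ if and only if $1-a$ is.

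Finally I would split into the two cases and observe the cancellation. If $1-a$ is a square, then $C\cong C_0$ over $\mathbb{F}_{p^2}$, so by Lemma~\ref{maxminC2} the curve $C$ is maximal iff $p\equiv 3$. If $1-a$ is a non-square, then $C$ is isomorphic over $\mathbb{F}_{p^2}$ to the quadratic twist of $C_0$, so $C$ is maximal iff $C_0$ is minimal; but by Lemma~\ref{maxminC2} (with $1-a$ a non-square) $C_0$ is maximal iff $p\equiv 1$, so again $C$ is maximal iff $p\equiv 3$. Either way $C$ is maximal when $p\equiv 3\pmod 4$ and minimal when $p\equiv 1\pmod 4$ (the latter because $C$, being $\mathbb{F}_{p^2}$-isomorphic to $C_0$ or to its quadratic twist, is maximal or minimal by Lemma~\ref{maxminC2}). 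The main obstacle is exactly the twist argument of the previous paragraph: the square-class of $1-a$ simultaneously controls which twist of $C_0$ equals $C$ \emph{and} the maximality criterion for $C_0$, and these two effects are opposite, so one never has to decide whether $1-a$ is in fact a square.
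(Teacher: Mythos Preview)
Your proof is correct and follows essentially the same route as the paper's: pass to the normal form $Y^2=(X^2-1)(X^2-a)(X^2-b)$, use Lemma~\ref{decomposed} and Auer--Top to see that $a,b$ are fourth powers in $\mathbb{F}_{p^2}$, transform to a Rosenhain form via $(a_1,a_2,a_3)=(1,-1,\sqrt a)$ with twist factor $\kappa_0=4\sqrt a\,(a-b)$, observe $a-b=-t_1(1-a)$ so that $\kappa_0$ is a square iff $1-a$ is, and then let the twist cancellation against Lemma~\ref{maxminC2} finish the argument; the reduction to a single Rosenhain form via Main Theorem~A(1) and Lemma~\ref{alltoone} is exactly what the paper does as well. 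Your write-up makes the twist bookkeeping more explicit than the paper's, but the underlying argument is the same.
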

\begin{proof}
Recall from Subsection \ref{RA} that the curve $C$ has a form\vspace{-0.5mm}
\[
    Y^2 = (X^2-1)(X^2-a)(X^2-b),\vspace{-0.5mm}
\]
where $a$ and $b$ are squares in $\mathbb{F}_{p^2}$. Here, it suffices to show this proposition holds for {\it one} Rosenhain form of it by Lemma \ref{alltoone} and Main Theorem A(1). Set $(a_1,a_2,a_3) := (1,-1,\sqrt{a})$ and $\{a_4,a_5,a_6\} = \bigl\{-\sqrt{a},\sqrt{b},-\sqrt{b}\bigr\}$, then the transformation\vspace{-0.5mm}
\[
    X \mapsto \frac{X-a_1}{X-a_3} \cdot \frac{a_2-a_3}{a_2-a_1} =: x, \ \, Y \mapsto \frac{c^{-1}Y}{(a_3-a_1)(a_3-a_2)(a_3-a_4)(a_3-a_5)(a_3-a_6)}\biggl(\frac{a_3-a_1}{X-a_3}\cdot\frac{a_2-a_3}{a_2-a_1}\bigg)^{\!\!3} =: y\vspace{-0.5mm}
\]
gives the equation\vspace{1.5mm}
\begin{equation}\label{C2rosenhain}
    \kappa y^2 = x(x-1)(x-\lambda)(x-\mu)(x-\nu), \quad \kappa := 4\sqrt{a}(a-b)
\end{equation}
where $\lambda,\mu,\nu$ are given in (\ref{lmn}). Here, this $\kappa$ is a square in $\mathbb{F}_{p^2}\hspace{-0.3mm}$ if and only if $1-a$ is a square in $\mathbb{F}_{p^2}$. Indeed, recall from Lemma \ref{decomposed} that we can write
\[
    a = \frac{t_1}{t_2} \cdot \frac{1-t_2}{1-t_1}, \quad b = \frac{t_1}{t_2},
\]
where $t_i$ and $1-t_i$ are fourth powers in $\mathbb{F}_{p^2}$. Hence $\kappa = 4\sqrt{a}(a-b)$ is a square in $\mathbb{F}_{p^2}\hspace{-0.3mm}$ if and only if $a-b$ is a square in $\mathbb{F}_{p^2}$. Moreover $a-b$ is a square in $\mathbb{F}_{p^2}$ if and only if $1-a$ is a square in $\mathbb{F}_{p^2}$ since $a-b = -(1-a)t_1$. In the following, we divide into two cases depending
on whether $p \equiv 3 \pmod{4}$ or $p \equiv 1 \pmod{4}$.
\begin{itemize}
    \item The case of $p \equiv 3 \pmod{4}$: It follows from Lemma \ref{maxminC2} and the above discussion that the curve in (\ref{C2rosenhain}) is maximal if and only if $1-a$ is a square in $\mathbb{F}_{p^2}$. Since this condition is equivalent to that $\kappa$ is a square in $\mathbb{F}_{p^2}$, then the curve $C: y^2 = x(x-1)(x-\lambda)(x-\mu)(x-\nu)$ is maximal over $\mathbb{F}_{p^2}$.
    \item The case of $p \equiv 1 \pmod{4}$: It follows from Lemma \ref{maxminC2} and the above discussion that the curve in (\ref{C2rosenhain}) is minimal if and only if $1-a$ is a square in $\mathbb{F}_{p^2}$. Since this condition is equivalent to that $\kappa$ is a square in $\mathbb{F}_{p^2}$, then the curve $C: y^2 = x(x-1)(x-\lambda)(x-\mu)(x-\nu)$ is minimal over $\mathbb{F}_{p^2}$.
\end{itemize}
Therefore, this proposition is true.
\end{proof}
\begin{Prop}\label{isomax}
Assume that two genus-2 curves\vspace{-1mm}
\begin{align*}
    C&: y^2 = x(x-1)(x-\lambda)(x-\mu)(x-\nu),\\
    C'&: y^2 = x(x-1)(x-\lambda')(x-\mu')(x-\nu')\\[-6.5mm]
\end{align*}
are Richelot isogenous. If $C$ is maximal (resp. minimal) over $\mathbb{F}_{p^2}$, then so is $C'$.
\end{Prop}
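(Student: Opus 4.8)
The plan is to reduce everything to the fact, already used in the proof of Proposition~\ref{lie}(3), that a genus-$2$ curve $X$ over $\mathbb{F}_{p^2}$ is maximal (resp.\ minimal) precisely when the $\mathbb{F}_{p^2}$-Frobenius acts on $\mathrm{Jac}(X)$ as the endomorphism $-p$ (resp.\ as $p$). Indeed, maximality means that the $L$-polynomial of $X/\mathbb{F}_{p^2}$ is $(1+pT)^{4}$, equivalently that the characteristic polynomial of Frobenius on $\mathrm{Jac}(X)$ is $(T+p)^{4}$; since Frobenius is semisimple this forces $\pi_X=-p$ in $\mathrm{End}(\mathrm{Jac}(X))$, and conversely $\pi_X=-p$ gives back maximality by point counting. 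This is exactly the relation "$F^{2}=-p$'' invoked for maximal curves in Subsection~\ref{A}. The minimal case is identical with $-p$ replaced by $p$.

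First I would check that the Richelot isogeny relating $C$ and $C'$ is defined over $\mathbb{F}_{p^2}$. Since $C$ is maximal (resp.\ minimal), it is superspecial, so by Proposition~\ref{lie} the Rosenhain invariants $\lambda,\mu,\nu$ lie in $\mathbb{F}_{p^2}$; hence the six Weierstrass points of $C$, and therefore all of $\mathrm{Jac}(C)[2]$, are rational over $\mathbb{F}_{p^2}$. (Equivalently: with $\pi_C=-p$ and $-p$ odd, $\pi_C$ acts as the identity on $\mathrm{Jac}(C)[2]$.) Consequently the isotropic subgroup $G\cong(\mathbb{Z}/2\mathbb{Z})^{2}$ with $\mathrm{Jac}(C)/G\cong\mathrm{Jac}(C')$ is defined over $\mathbb{F}_{p^2}$, and Proposition~\ref{formula}(1) then shows that the Richelot isogeny $\phi\colon\mathrm{Jac}(C)\to\mathrm{Jac}(C')$ is defined over $\mathbb{F}_{p^2}$.

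Then I would exploit that $\phi$, being defined over $\mathbb{F}_{p^2}$, commutes with the $\mathbb{F}_{p^2}$-Frobenius: $\pi_{C'}\circ\phi=\phi\circ\pi_{C}$. If $C$ is maximal then $\pi_C=-p$, so $\pi_{C'}\circ\phi=\phi\circ(-p)=(-p)\circ\phi$, and since $\phi$ is surjective we conclude $\pi_{C'}=-p$ on $\mathrm{Jac}(C')$; by the first paragraph, $C'$ is maximal over $\mathbb{F}_{p^2}$. The minimal case is word-for-word the same with $-p$ replaced by $p$. (Alternatively, once $\phi$ is known to be an $\mathbb{F}_{p^2}$-isogeny one can simply use that isogenous abelian varieties over a finite field share the same characteristic polynomial of Frobenius, hence $\#C(\mathbb{F}_{p^2})=\#C'(\mathbb{F}_{p^2})$, so $C'$ attains the same Hasse--Witt bound as $C$.) I expect the only point requiring real care is the descent in the second step: one must ensure the Richelot isogeny is genuinely defined over $\mathbb{F}_{p^2}$ and not merely over some extension, which is precisely where the rationality of $\mathrm{Jac}(C)[2]$ (via Proposition~\ref{lie}, or via $\pi_C=-p$) feeds into Proposition~\ref{formula}(1); everything after that is formal.
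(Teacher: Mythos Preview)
Your argument contains a real gap at the step where you invoke Proposition~\ref{formula}(1). That proposition produces an $\mathbb{F}_{p^2}$-rational Richelot isogeny from $\mathrm{Jac}(C)$ to $\mathrm{Jac}(C'')$, where $C''$ is the explicit sextic model $y^2=\delta^{-1}H_1H_2H_3$ of equation~(\ref{Cdash}); it does \emph{not} hand you an $\mathbb{F}_{p^2}$-isogeny to $\mathrm{Jac}(C')$ for the Rosenhain model $C'\colon y^2=x(x-1)(x-\lambda')(x-\mu')(x-\nu')$ appearing in the statement. Your Frobenius transport then correctly yields $\pi_{C''}=-p$ and hence that $C''$ is maximal, but $C'$ and $C''$ are a priori only $\overline{\mathbb{F}_{p^2}}$-isomorphic. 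Over $\mathbb{F}_{p^2}$ they could differ by a nontrivial quadratic twist, in which case $C'$ would be \emph{minimal} rather than maximal --- the opposite conclusion. Thus the sentence ``the Richelot isogeny $\phi\colon\mathrm{Jac}(C)\to\mathrm{Jac}(C')$ is defined over $\mathbb{F}_{p^2}$'' is precisely the assertion that needs proof, and you have assumed it.

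The paper closes this gap by an explicit computation: it writes down the change of variables taking $C''$ to a Rosenhain form $\kappa y^2 = x(x-1)(x-\lambda')(x-\mu')(x-\nu')$, evaluates the twisting constant as $\kappa = 4\sqrt{D_3D_1}$, and then invokes Lemma~\ref{fourth} (itself resting on Main Theorem~A(1)) to conclude that $\kappa$ is a square in $\mathbb{F}_{p^2}$. That is the substantive arithmetic input, and it cannot be replaced by the purely formal Frobenius-commutation step. Once $\kappa$ is known to be a square, Lemma~\ref{alltoone} passes to every Rosenhain form of $C''$, and then your abstract argument (or the paper's implicit version of it) finishes the proof. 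In short: your outline is correct modulo the identification $C''\cong C'$ over $\mathbb{F}_{p^2}$, but that identification is the entire content of the proposition.
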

\begin{proof}
Here, we use notations in Subsection \ref{A}. Thanks to Main Theorem A\,(1), all $\alpha_1,\alpha_2,\beta_1,\beta_2,\gamma_1$ and $\gamma_2$ are elements of $\mathbb{F}_{p^2}$. Hence, it follows from Proposition \ref{formula}\,(1) that a Richelot isogeny $\phi: {\rm Jac}(C) \rightarrow {\rm Jac}(C')$ is defined over $\mathbb{F}_{p^2}$, where $C'$ is the form in (\ref{Cdash}). The transformation\vspace{-2mm}
\[
    X \mapsto \frac{X-\alpha_2}{X-\alpha_1} \cdot \frac{\gamma_2-\alpha_1}{\gamma_2-\alpha_2} =: x, \ \, Y \mapsto \frac{Y}{(\gamma_2-\alpha_1)(\gamma_2-\alpha_2)(\gamma_2-\beta_1)(\gamma_2-\beta_2)(\gamma_2-\gamma_1)}\biggl(\frac{\alpha_1-\alpha_2}{X-\alpha_1} \cdot \frac{\gamma_2-\alpha_1}{\gamma_2-\alpha_2}\biggr)^{\!\!3} =: y
\]
gives a Rosenhain form\vspace{1mm}
\[
    C': \kappa y^2 = x(x-1)(x-\lambda')(x-\mu')(x-\nu'),
\]
for $\lambda',\mu',\nu' \in \mathbb{F}_{p^2}\hspace{-0.3mm}$ with
\begin{align*}
    \kappa &= -\delta^{-1}(a_2+a_3-a_4-a_5)(\alpha_1-\alpha_2)(\gamma_2-\beta_1)(\gamma_2-\beta_2)(\gamma_2-\gamma_1)\\
    &= \frac{a_2+a_3-a_4-a_5}{D_2-D_3} \cdot \frac{4(D_2-D_3)\sqrt{D_3D_1}}{a_2+a_3-a_4-a_5} = 4\sqrt{D_3D_1}.
\end{align*}
Then $\kappa$ is a square in $\mathbb{F}_{p^2}\hspace{-0.3mm}$ by using Lemma \ref{fourth}, and hence this proposition is true (we note that we need only prove that for {\it one} Rosenhain form of $C'$ by Lemma \ref{alltoone}).
\end{proof}

\begin{proof}[\underline{Proof of Main Theorem B}]
By using Proposition \ref{C2max}, this assertion holds for $C$ such that the Jacobian variety of $C$ is $(2,2)$-isogenous to the product of two elliptic curves. Moreover, by using Proposition \ref{isomax}, this assertion holds also for $C'$ such that $C$ and $C'$ are Richelot isogenous. By doing this repeatedly, we complete the proof for all genus-2 curves (recall from Algorithm \ref{Algo} that this procedure ends in finite times).
\end{proof}

\section{Applications of Main Theorems}\label{Application}
In this section, we give some results obtained by applying Main Theorems. In Subsection 4.1, we give another proof that there does not exist a superspecial genus-2 curve of characteristic $p=3$. In Subsections 4.2 and 4.3, we will show that similar results as Main Theorem B hold for superspecial genus-3 and genus-4 hyperelliptic curves whose automorphism groups contain $\mathbb{Z}/2\mathbb{Z} \times \mathbb{Z}/2\mathbb{Z}$.

\subsection{Another proof of non-existence of superspecial genus-2 curve for $p=3$}\label{another}
Ibukiyama-Katsura-Oort \cite{IKO} showed that there are no superspecial genus-2 curves in characteristic $p=3$, by computing the class numbers of quaternion hermitian forms. In the following, we give another proof of this. The next corollary holds for general $p \geq 3$.
\begin{Cor}\label{S}
Let $S \subset \mathbb{F}_{p^2}\hspace{-0.3mm}$ be the set of all elements $s \neq 0,1$ such that both $s$ and $1-s$ are squares in $\mathbb{F}_{p^2}$. If the genus-2 curve $C: y^2 = x(x-1)(x-\lambda)(x-\mu)(x-\nu)$ is superspecial, then $\lambda,\mu$ and $\nu$ belong to $S$.
\end{Cor}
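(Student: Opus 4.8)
The plan is to obtain this corollary as an immediate repackaging of Proposition \ref{lie} and Main Theorem A(1) into the language of the set $S$. First I would record the trivial observation that, since $C$ is a genus-$2$ curve, the defining sextic must be separable, so the five values $0,1,\lambda,\mu,\nu$ are pairwise distinct; in particular $\lambda,\mu,\nu\notin\{0,1\}$ and they are pairwise distinct. Next, Proposition \ref{lie} tells us that the superspeciality of $C$ forces $\lambda,\mu,\nu\in\mathbb{F}_{p^2}$, which is the ambient field appearing in the definition of $S$.

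The second and final ingredient is Main Theorem A(1): when $C$ is superspecial, all nine values $\lambda,\mu,\nu,1-\lambda,1-\mu,1-\nu,\lambda-\mu,\mu-\nu,\nu-\lambda$ are squares in $\mathbb{F}_{p^2}$. Discarding the three difference terms and keeping the six we need, we see that $\lambda$ and $1-\lambda$ are squares, $\mu$ and $1-\mu$ are squares, and $\nu$ and $1-\nu$ are squares. Combining this with the previous paragraph, each of $\lambda,\mu,\nu$ is an element of $\mathbb{F}_{p^2}$, different from $0$ and $1$, whose own value and whose complement to $1$ are both squares in $\mathbb{F}_{p^2}$; this is exactly the membership condition for $S$, so $\lambda,\mu,\nu\in S$, as claimed.

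I expect no genuine obstacle in proving Corollary \ref{S} itself: the only thing to verify beyond citing Proposition \ref{lie} and Main Theorem A(1) is the elementary separability remark $\{\lambda,\mu,\nu\}\cap\{0,1\}=\varnothing$. Accordingly, in the write-up I would keep this proof to just a few lines. The actual work of the subsection lies in what comes \emph{after} the corollary, namely specialising to $p=3$: there one computes $S\subset\mathbb{F}_9$ explicitly (the nonzero squares of $\mathbb{F}_9$ being a subgroup of order $4$, a short check shows $S$ has fewer than three elements), so one cannot choose three \emph{distinct} Rosenhain invariants $\lambda,\mu,\nu$ inside $S$; this contradiction reproves the Ibukiyama--Katsura--Oort non-existence statement for characteristic $3$. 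So the effort should be reserved for that finite computation rather than for the corollary.
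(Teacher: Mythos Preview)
Your proof is correct and matches the paper's own one-line argument, which simply says the corollary is a direct consequence of Main Theorem~A(1). Note that your invocation of Proposition~\ref{lie} is redundant, since being a square in $\mathbb{F}_{p^2}$ already forces $\lambda,\mu,\nu\in\mathbb{F}_{p^2}$; the only extra content you add is the (trivial but necessary) separability remark that $\lambda,\mu,\nu\notin\{0,1\}$, which the paper leaves implicit.
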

\begin{proof}
This is a direct consequence of Main Theorem\hspace{1mm}A(1).
\end{proof}
\begin{Thm}[{\cite[Thoerem\ 3.3]{IKO}}]
No superspecial genus-2 curves exist in characteristic $3$.
\end{Thm}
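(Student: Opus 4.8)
The plan is to use Corollary \ref{S}, which tells us that if a superspecial genus-2 curve $C: y^2 = x(x-1)(x-\lambda)(x-\mu)(x-\nu)$ exists over characteristic $p=3$, then each of $\lambda,\mu,\nu$ lies in the set $S \subset \mathbb{F}_{9}$ consisting of elements $s \neq 0,1$ with both $s$ and $1-s$ squares in $\mathbb{F}_9$. So the first step is simply to compute $S$ explicitly when $p=3$: the field $\mathbb{F}_9$ has $8$ nonzero elements, of which $4$ are squares; excluding $0$ and $1$ leaves a handful of candidates for $s$, and imposing the extra condition that $1-s$ also be a square cuts this down further. I expect $S$ to turn out to be empty (or possibly a single element, which still fails the distinctness requirement below).

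Concretely, I would write $\mathbb{F}_9 = \mathbb{F}_3(\theta)$ with $\theta^2 = -1$ (note $-1$ is a nonsquare mod $3$, so this is a genuine quadratic extension), list the squares in $\mathbb{F}_9^\times$ — these are the elements of the index-2 subgroup, namely $\{1, -1, \theta, -\theta\}$ is \emph{not} right since $-1$ must be a nonsquare in $\mathbb{F}_9$... so I would recompute: the squares in $\mathbb{F}_9^\times$ form the unique subgroup of order $4$, and one checks directly which of $\{1,-1,\theta,-\theta,1+\theta,1-\theta,-1+\theta,-1-\theta\}$ are squares. Then for each square $s \notin \{0,1\}$ I check whether $1-s$ is also a square. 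The key step is verifying that no such $s$ survives, i.e. $S = \emptyset$ in characteristic $3$.

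Given $S = \emptyset$, the conclusion is immediate: Corollary \ref{S} would force $\lambda \in S$, a contradiction, so no superspecial genus-2 curve can exist over characteristic $3$. (Even in the borderline case where $S$ had exactly one element, a Rosenhain form requires $\lambda,\mu,\nu$ to be three \emph{distinct} elements of $\bar{\mathbb{F}}_p \setminus \{0,1\}$, so $|S| \leq 2$ would already suffice — but I expect the cleaner statement $S=\emptyset$.)

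The main obstacle here is essentially nil — it is a finite computation in $\mathbb{F}_9$. The only thing to be careful about is correctly identifying the squares in $\mathbb{F}_9^\times$ (the subgroup of order $4$), and then checking the $1-s$ condition without arithmetic slips; one should also double-check that the argument genuinely does not need the full strength of Main Theorem A but only part (1) as packaged in Corollary \ref{S}, which it does. So the proof will be short: exhibit that $S = \emptyset$ for $p=3$, then invoke Corollary \ref{S}.
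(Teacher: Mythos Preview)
Your approach is exactly the paper's: invoke Corollary~\ref{S} and compute $S$ for $p=3$. However, your expectation and one intermediate claim are off. You write that ``$-1$ must be a nonsquare in $\mathbb{F}_9$,'' but this is false: with $\theta^2=-1$ you have $-1=\theta^2$, so $-1$ \emph{is} a square in $\mathbb{F}_9$, and your first guess $\{1,-1,\theta,-\theta\}=\{1,2,\theta,2\theta\}$ for the squares in $\mathbb{F}_9^\times$ was in fact correct. Running through the condition that $1-s$ also be a square then gives $S=\{2\}$, not $S=\emptyset$ (for $s=2$ one has $1-s=-1=2$, a square; for $s=\theta$ or $s=2\theta$ one gets $1-s\in\{1\pm\theta\}$, which are nonsquares). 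So it is precisely your ``borderline case'' that applies: $\lambda=\mu=\nu=2$ is forced, contradicting the requirement that the Rosenhain parameters be distinct. This is verbatim the paper's argument.
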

\begin{proof}
Assume that $C: y^2 = x(x-1)(x-\lambda)(x-\mu)(x-\nu)$ is superspecial. For $p=3$, we see that the set $S$ in Corollary \ref{S} is given as $S=\{2\}$ by a simple computation. This means that $\lambda = \mu = \nu = 2$, which leads to a contradiction since $C$ has a singular point.
\end{proof}

\subsection{Application to genus-3 hyperelliptic curves}\label{genus3}
Moriya-Kudo studied the superspeciality of genus-3 hyperelliptic curves $D$ such that ${\rm Aut}(D) \hspace{-0.3mm}\supset \mathbb{Z}/2\mathbb{Z} \times \mathbb{Z}/2\mathbb{Z}$ in \cite{MK}. They showed that such $D$ can be written as
\[
    D: y^2 = (x^2-1)(x^2-a)(x^2-b)(x^2-c)
\]
for $a,b,c \in K$, and computed the number of isomorphism classes of superspecial $D$ for small primes $p \leq 200$. In the following, we show that if $D$ is superspecial, then $a,b,c$ belongs to $\mathbb{F}_{p^2}\hspace{-0.3mm}$ and moreover $D$ is maximal or minimal over $\mathbb{F}_{p^2}$.
\begin{Thm}
Assume that a genus-3 hyperelliptic curve
\[
    D: y^2 = (x^2-1)(x^2-a)(x^2-b)(x^2-c)
\]
is superspecial. Then, we have the following statements:
\begin{enumerate}
\item Each $a,b,c$ is a square in $\mathbb{F}_{p^2}$.\vspace{-1mm}
\item If $p \equiv 3 \pmod{4}$, then the curve $D$ is maximal over $\mathbb{F}_{p^2}$. Otherwise, the curve $D$ is minimal over $\mathbb{F}_{p^2}$.
\end{enumerate}
\end{Thm}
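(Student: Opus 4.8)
The plan is to split $D$ along its $(\mathbb{Z}/2\mathbb{Z})^2$-action, reducing everything to the genus-$2$ and genus-$1$ quotients, then to feed the genus-$2$ part to Main Theorems A and B and the genus-$1$ part to Auer--Top. Concretely, $D$ carries the involutions $\sigma\colon(x,y)\mapsto(-x,y)$, $\tau\colon(x,y)\mapsto(-x,-y)$ and $\iota=\sigma\tau\colon(x,y)\mapsto(x,-y)$ (the hyperelliptic one), so $\langle\sigma,\tau\rangle\cong(\mathbb{Z}/2\mathbb{Z})^2$. Writing $X=x^2$, the quotients are the genus-$1$ curve $E_1:=D/\langle\sigma\rangle\colon Y^2=(X-1)(X-a)(X-b)(X-c)$ (via $Y=y$), the genus-$2$ curve $C':=D/\langle\tau\rangle\colon Y^2=X(X-1)(X-a)(X-b)(X-c)$ (via $Y=xy$), and $D/\langle\iota\rangle=\mathbb{P}^1$, so that ${\rm Jac}(D)$ is isogenous to $E_1\times{\rm Jac}(C')$. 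Moreover the basis $dx/y,\,x\,dx/y,\,x^2\,dx/y$ of $H^0(D,\varOmega_D)$ is permuted up to sign by $\sigma$ and $\tau$, the $\sigma$-invariant differential being pulled back from $E_1$ and the $\tau$-invariant ones from $C'$; since the Cartier operator commutes with these pullbacks, Nygaard's criterion \cite{Nygaard} gives that $D$ is superspecial if and only if $E_1$ is supersingular and $C'$ is superspecial. Note that $C'$ is precisely the Rosenhain curve $C_{a,b,c}$.

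Assertion (1), together with the descent to $\mathbb{F}_{p^2}$, then follows from Main Theorem A(1) applied to the superspecial curve $C'=C_{a,b,c}$: all nine differences of $\{0,1,a,b,c\}$ — in particular $a,b,c$ themselves — are squares in $\mathbb{F}_{p^2}$. Consequently $E_1$, $C'$ and $D$ are defined over $\mathbb{F}_{p^2}$, the quotient maps and the induced isogeny ${\rm Jac}(D)\to E_1\times{\rm Jac}(C')$ are defined over $\mathbb{F}_{p^2}$, and since an isogeny over $\mathbb{F}_{p^2}$ preserves the characteristic polynomial of Frobenius, $D$ is maximal (resp.\ minimal) over $\mathbb{F}_{p^2}$ precisely when both $E_1$ and $C'$ are. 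By Main Theorem B, the genus-$2$ curve $C'$ is maximal over $\mathbb{F}_{p^2}$ when $p\equiv3\pmod4$ and minimal when $p\equiv1\pmod4$.

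It then remains to determine the behaviour of $E_1$. I would put it into Legendre form exactly as in Subsection~\ref{Rosenhain}, via the Möbius transformation sending $1,a,b$ to $0,\infty,1$; this produces $\kappa v^2=u(u-1)(u-t)$ with $t=\dfrac{(c-1)(b-a)}{(c-a)(b-1)}\notin\{0,1\}$ and with $\kappa$ equal, up to squares in $\mathbb{F}_{p^2}$, to a product of pairwise differences of $\{1,a,b,c\}$ and a sign. Because $-1$ is a square in $\mathbb{F}_{p^2}$ and, by Main Theorem A(1) for $C'$, every pairwise difference of $\{0,1,a,b,c\}$ is a square in $\mathbb{F}_{p^2}$, the scalar $\kappa$ is a square in $\mathbb{F}_{p^2}$, so $E_1$ is $\mathbb{F}_{p^2}$-isomorphic to $v^2=u(u-1)(u-t)$. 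Since $E_1$ is supersingular, Auer--Top \cite[Lemma~2.2]{AT} shows that $E_1$ is maximal over $\mathbb{F}_{p^2}$ exactly when $p\equiv3\pmod4$.

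Combining the pieces: if $p\equiv3\pmod4$, then $E_1$ and $C'$ are both maximal, so all six Frobenius eigenvalues of ${\rm Jac}(D)\sim E_1\times{\rm Jac}(C')$ equal $-p$, whence $\#D(\mathbb{F}_{p^2})=p^2+1+6p$ attains the Hasse--Witt upper bound and $D$ is maximal; if $p\equiv1\pmod4$, the same argument with $+p$ shows $D$ is minimal. The step I expect to be the real obstacle is pinning down the maximality of the elliptic quotient $E_1$: a supersingular elliptic curve over $\mathbb{F}_{p^2}$ presented in a twisted Legendre form can be minimal while $C'$ is maximal, so one genuinely needs the square-class computation for $\kappa$ — and it is precisely Main Theorem A(1) (every pairwise difference of $\{0,1,a,b,c\}$ is a square in $\mathbb{F}_{p^2}$) that forces $E_1$ and $C'$ onto the same side.
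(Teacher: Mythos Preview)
Your proof is correct and takes essentially the same approach as the paper: decompose $D$ via its involutions into the elliptic quotient $E_1$ and the genus-$2$ Rosenhain curve $C'=C_{a,b,c}$, apply Main Theorem~A(1) to $C'$ for part~(1), and for part~(2) combine Main Theorem~B on $C'$ with Auer--Top on $E_1$ after checking (again via Main Theorem~A(1)) that the Legendre-form twist factor for $E_1$ is a square in $\mathbb{F}_{p^2}$. The paper differs only cosmetically, citing \cite{MK} for the fiber-product decomposition rather than arguing with the Cartier operator, and writing the twist factor explicitly as $\kappa=-(1-b)(c-a)$.
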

\begin{proof}
As shown in \cite[Section\,2]{MK}, the curve $D$ is birational to the fiber product $E \times_{\mathbb{P}^1\!} C$ where
\begin{alignat*}{2}
    E&: Y^2 = (X-1)(X-a)(X-b)(X-c) & {\rm \ \,with\ } & X = x^2,\,Y = y,\\
    C&: Y^2 = X(X-1)(X-a)(X-b)(X-c) & {\rm \,with\ } & X = x^2,\,Y = xy.
\end{alignat*}
By the assumption that $D$ is superspecial, then we have that $E$ is supersingular and $C$ is also superspecial. We consider the change of variables\vspace{-1mm}
\begin{equation}\label{trans}
    X \rightarrow \frac{X-1}{X-a} \cdot \frac{b-a}{b-1} =: u, \quad Y \mapsto \frac{Y}{(a-1)(a-b)(a-c)}\biggl(\frac{a-1}{X-a} \cdot \frac{b-a}{b-1}\biggr)^{\!\!2} =: v.\vspace{-0.5mm}
\end{equation}
This transformed the curve $E$ into the form\vspace{-0.5mm}
\[
    \kappa v^2 = u(u-1)(u-\lambda), \quad \lambda := \frac{(b-a)(c-1)}{(b-1)(c-a)}\vspace{-0.5mm}
\]
with $\kappa = -(1-b)(c-a)$.\par\vspace{1mm}
\hspace{5.5mm}(1) Using Main Theorem A(1), all the 9 values
\[
    a,\,b,\,c,\,1-a,\,1-b,\,1-c,\,a-b,\,b-c,\,c-a
\]
are squares in $\mathbb{F}_{p^2}\hspace{-0.3mm}$. Hence, we obtain the first assertion of this theorem.\par\vspace{1mm}
\hspace{5.5mm}(2) Using Auer-Top's result \cite[Proposition\hspace{0.8mm}2.2]{AT}, a supersingular elliptic curve $v^2 = u(u-1)(u-\lambda)$ is maximal (resp. minimal) over $\mathbb{F}_{p^2}\hspace{-0.5mm}$ when $p \equiv 3$ (resp. $p \equiv 1$), and so is the elliptic curve $\kappa v^2 = u(u-1)(u-\lambda)$ since $\kappa = -(1-b)(c-a)$ is a square in $\mathbb{F}_{p^2}$. We see that $E$ is maximal (resp. minimal) over $\mathbb{F}_{p^2}\hspace{-0.5mm}$ when $p \equiv 3$ (resp. $p \equiv 1$) since the transformation of (\ref{trans}) is defined over $\mathbb{F}_{p^2}$. Moreover $C$ is maximal (resp. minimal) over $\mathbb{F}_{p^2}\hspace{-0.5mm}$ when $p \equiv 3$ (resp. $p \equiv 1$) by using Main Theorem B. As the birational map $D \rightarrow E \times_{\mathbb{P}^1\!} C$ is defined over $\mathbb{F}_{p^2}$, hence this theorem is true.
\end{proof}

\subsection{Application to genus-4 hyperelliptic curves}
Similarly to the genus-3 case, Ohashi-Kudo-Harashita \cite{OKH} studied the superspeciality of genus-4 hyperellitpic curves $D'$ satisfying ${\rm Aut}(D') \hspace{-0.3mm}\supset \mathbb{Z}/2\mathbb{Z} \times \mathbb{Z}/2\mathbb{Z}$. They showed that such $D'$ can be written as
\[
    D': y^2 = (x^2-1)(x^2-a)(x^2-b)(x^2-c)(x^2-d)
\]
for $a,b,c,d \in K$, and computed the number of isomorphism classes of superspecial $D'$ for all primes $p \leq 200$. They also expected \cite[Remark\,3]{OKH} that superspecial $D'$ are all maximal or minimal over $\mathbb{F}_{p^2}$. In the following, we prove their conjecture (Theorem \ref{genus4}). In addition, we give a simple criterion in terms of $a,b,c,d$ that tells whether $D'$ is maximal or minimal over $\mathbb{F}_{p^2}\!$ (Corollary \ref{criterion}).
\begin{Thm}\label{genus4}
Assume that a genus-4 hyperelliptic curve
\[
    D': y^2 = (x^2-1)(x^2-a)(x^2-b)(x^2-c)(x^2-d)
\]
is superspecial. Then, we have the following statements:
\begin{enumerate}
    \item Each $a,b,c,d$ is a square in $\mathbb{F}_{p^2}$.\vspace{-1mm}
    \item The curve $D'$ is maximal or minimal over $\mathbb{F}_{p^2}$. 
\end{enumerate}
\end{Thm}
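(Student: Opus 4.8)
The plan is to run the genus-$2$ machinery of Main Theorems A and B along the two commuting involutions of $D'$, exactly as for the genus-$3$ curves treated above. Writing $\sigma\colon(x,y)\mapsto(-x,y)$ and $\tau\colon(x,y)\mapsto(-x,-y)$ and putting $C_1:=D'/\langle\sigma\rangle$, $C_2:=D'/\langle\tau\rangle$, one has (as in \cite[Section 2]{OKH}, with $X=x^2$)
\[
    C_1\colon Y^2=(X-1)(X-a)(X-b)(X-c)(X-d), \qquad C_2\colon Y^2=X(X-1)(X-a)(X-b)(X-c)(X-d),
\]
the curve $D'$ is birational to the fiber product $C_1\times_{\mathbb{P}^1}C_2$, so that ${\rm Jac}(D')$ is isogenous to ${\rm Jac}(C_1)\times{\rm Jac}(C_2)$, and the superspeciality of $D'$ is equivalent to that of both $C_1$ and $C_2$ (the Cartier operator on the regular $1$-forms of $D'$ is block-diagonal for the splitting induced by $\sigma$ and $\tau$). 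Recording this decomposition is the first step.

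Next I would put $C_1$ and $C_2$ in explicit Rosenhain form. Sending $(\infty,1,a)\mapsto(\infty,0,1)$ turns $C_1$ into $(a-1)\,y^2=x(x-1)(x-\lambda_1)(x-\mu_1)(x-\nu_1)$ with $\lambda_1=\tfrac{b-1}{a-1}$, $\mu_1=\tfrac{c-1}{a-1}$, $\nu_1=\tfrac{d-1}{a-1}$; and sending $(0,1,d)\mapsto(0,1,\infty)$ turns $C_2$ into $\kappa_2\,y^2=x(x-1)(x-\lambda_2)(x-\mu_2)(x-\nu_2)$ with $\lambda_2=\tfrac{a(1-d)}{a-d}$, $\mu_2=\tfrac{b(1-d)}{b-d}$, $\nu_2=\tfrac{c(1-d)}{c-d}$, and $\kappa_2=(d-a)(d-b)(d-c)$ up to a square (cf.\ Lemma \ref{defined}). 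For part (1): Proposition \ref{lie} puts all six invariants in $\mathbb{F}_{p^2}$, and solving these relations then gives $a,b,c,d\in\mathbb{F}_{p^2}$; applying Main Theorem A(1) to the two Rosenhain forms shows that $\nu_1=\tfrac{d-1}{a-1}$, $1-\nu_1=\tfrac{a-d}{a-1}$, $\lambda_2=\tfrac{a(1-d)}{a-d}$, $1-\lambda_2=\tfrac{d(a-1)}{a-d}$ and their $b$- and $c$-analogues are all squares in $\mathbb{F}_{p^2}$, and taking suitable products of these (using that $-1$ is a square in $\mathbb{F}_{p^2}$) yields at once that each of $a,b,c,d$ is a square in $\mathbb{F}_{p^2}$.

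For part (2): once $a,b,c,d\in\mathbb{F}_{p^2}$, the involutions $\sigma,\tau$, and hence the isogeny ${\rm Jac}(D')\sim{\rm Jac}(C_1)\times{\rm Jac}(C_2)$, are defined over $\mathbb{F}_{p^2}$, so $L_{D'}(T)=L_{C_1}(T)\,L_{C_2}(T)$ and it is enough to prove that $C_1$ and $C_2$ are both maximal, or both minimal, over $\mathbb{F}_{p^2}$. Now each $C_i$ is, over $\mathbb{F}_{p^2}$, isomorphic either to the superspecial Rosenhain curve $y^2=x(x-1)(x-\lambda_i)(x-\mu_i)(x-\nu_i)$ — which is maximal for $p\equiv3\pmod 4$ and minimal for $p\equiv1\pmod 4$ by Main Theorem B — or to its quadratic twist, for which maximal and minimal are interchanged, according as the coefficient $\kappa_i$ is or is not a square in $\mathbb{F}_{p^2}$; here $\kappa_1\equiv a-1$ and $\kappa_2\equiv(d-a)(d-b)(d-c)$ modulo squares. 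The crucial point is that $\kappa_1$ and $\kappa_2$ have the same quadratic character: Main Theorem A(2) applied to the Rosenhain form of $C_1$ gives that $\nu_1(\nu_1-1)(\nu_1-\lambda_1)(\nu_1-\mu_1)=\tfrac{(d-1)(d-a)(d-b)(d-c)}{(a-1)^4}$ is a fourth power, so $(d-a)(d-b)(d-c)$ differs from $d-1$ by a square; hence $\kappa_1\kappa_2\equiv(a-1)(d-1)$ modulo squares, which is a square since $\nu_1=\tfrac{d-1}{a-1}$ is one by Main Theorem A(1). Therefore $C_1$ and $C_2$ are both maximal or both minimal, $D'$ is maximal or minimal over $\mathbb{F}_{p^2}$, and the criterion of Corollary \ref{criterion} is obtained by spelling out the quadratic character of $\kappa_1$ in terms of $a,b,c,d$.

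The step I expect to be the main obstacle is this last one. The decomposition and part (1) are essentially bookkeeping once one has Main Theorems A and B together with the $(\mathbb{Z}/2\mathbb{Z})^2$-quotient description of $D'$; but a priori ${\rm Jac}(D')$ could be $\mathbb{F}_{p^2}$-isogenous to the product of a maximal and a minimal genus-$2$ Jacobian, in which case $D'$ would be neither maximal nor minimal. Excluding this is precisely what forces the use of the fourth-power refinement in Main Theorem A(2) rather than merely the squares of part (1), and computing the $\kappa_i$ correctly (in particular the degree-$5$ normalization for $C_1$) is where the care lies.
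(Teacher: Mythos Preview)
Your proof is correct and follows the same overall plan as the paper: decompose $D'$ via the two involutions into genus-$2$ curves $C_1,C_2$, put each in Rosenhain form, and apply Main Theorems~A and~B. The difference lies in the Rosenhain form chosen for $C_2$. The paper sends $(1,a,0)\mapsto(0,1,\infty)$, obtaining $\lambda'=\frac{a(b-1)}{b(a-1)}$, $\mu'=\frac{a(c-1)}{c(a-1)}$, $\nu'=\frac{a(d-1)}{d(a-1)}$ and twist factor $\kappa'=-(1-a)bcd$; once part~(1) shows that $b,c,d$ are squares, $\kappa'$ and $\kappa=-(1-a)$ visibly lie in the same square class and part~(2) is immediate. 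Your choice $(0,1,d)\mapsto(0,1,\infty)$ gives $\kappa_2\equiv(d-a)(d-b)(d-c)$, and you then compare $\kappa_1,\kappa_2$ via the product $\nu_1(\nu_1-1)(\nu_1-\lambda_1)(\nu_1-\mu_1)=\frac{(d-1)(d-a)(d-b)(d-c)}{(a-1)^4}$. This works, but your claim that it ``forces the use of the fourth-power refinement in Main Theorem~A(2)'' is not right: each of the four factors $\nu_1$, $\nu_1-1$, $\nu_1-\lambda_1$, $\nu_1-\mu_1$ is already a square in $\mathbb{F}_{p^2}$ by Main Theorem~A(1), so their product is a square without appealing to~A(2). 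The paper's proof of this theorem never invokes~A(2).
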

\begin{proof}
As shown in \cite[Section\,3]{OKH}, the curve $D'$ is birational to the fiber product $C_1 \times_{\mathbb{P}^1\!} C_2$ where
\begin{alignat*}{2}
    C_1&: Y^2 = (X-1)(X-a)(X-b)(X-c)(X-d),  & {\rm \ \,with\ } & X = x^2, \  Y = y,\\
    C_2&: Y^2 = X(X-1)(X-a)(X-b)(X-c)(X-d)  & {\rm \ \,with\ } & X = x^2, \  Y = xy.
\end{alignat*}
By the assumption that $D'$ is superspecial, then we obtain that two curves $C_1$ and $C_2$ are also superspecial. We consider the change of variables\vspace{-1mm}
\begin{equation}\label{trans1}
    X \rightarrow \frac{X-1}{a-1} =:u, \quad Y \mapsto \frac{Y}{(a-1)^3} =: v.\vspace{-1mm}
\end{equation}
This transformed the curve $C_1$ into the form\vspace{-1mm}
\[
    \kappa v^2 = u(u-1)(u-\lambda)(u-\mu)(u-\nu), \quad \lambda := \frac{b-1}{a-1}, \quad \mu := \frac{c-1}{a-1}, \quad \nu := \frac{d-1}{a-1},
\]
with $\kappa = -(1-a)$. On the other hand, the change of variables\vspace{-1.5mm}
\begin{equation}\label{trans2}
    X \rightarrow \frac{X-1}{X} \cdot \frac{a}{a-1} =: u, \quad Y \mapsto \frac{Y}{abcd}\biggl(\frac{1}{X} \cdot \frac{a}{a-1}\biggr)^{\!\!3} =: v\vspace{-0.5mm}
\end{equation}
transformed the curve $C_2$ into the form\vspace{-1mm}
\[
    \kappa' v^2 = u(u-1)(u-\lambda')(u-\mu')(u-\nu'), \quad \lambda' := \frac{a(b-1)}{b(a-1)}, \quad \mu' := \frac{a(c-1)}{c(a-1)}, \quad \nu' := \frac{a(d-1)}{d(a-1)},\vspace{-0.5mm}
\]
with $\kappa' = -(1-a)bcd$.\par\vspace{1mm}
\hspace{5.5mm}(1) Using Main Theorem A(1), all values $\lambda,\mu,\nu,1-\lambda,1-\mu,1-\nu$ and $\lambda',\mu',\nu',1-\lambda',1-\mu',1-\nu'$ are squares in $\mathbb{F}_{p^2}$. Here, one can compute\vspace{-1mm}
\[
    a = \frac{\lambda'(1-\lambda)}{\lambda(1-\lambda')}, \quad b = \frac{1-\lambda}{1-\lambda'}, \quad c = \frac{\mu}{\mu'} \cdot \frac{\lambda'(1-\lambda)}{\lambda(1-\lambda')}, \quad d = \frac{\nu}{\nu'} \cdot \frac{\lambda'(1-\lambda)}{\lambda(1-\lambda')},
\]
and hence $a,b,c$ and $d$ are all squares in $\mathbb{F}_{p^2}$.\par\vspace{1mm}
\hspace{5.5mm}(2) We divide into two cases depending on whether $1-a$ is a square in $\mathbb{F}_{p^2}\hspace{-0.3mm}$ or not. Recall that two curves $v^2 = u(u-1)(u-\lambda)(u-\mu)(u-\nu)$ and $v^2 = u(u-1)(u-\lambda')(u-\mu')(u-\nu')$ by Main Theorem B.
\begin{itemize}
    \item If $1-a$ is a square in $\mathbb{F}_{p^2}$, then two values $\kappa = 1-a$ and $\kappa' = -(1-a)bcd$ are squares in $\mathbb{F}_{p^2}$. Since two transformations (\ref{trans1}) and (\ref{trans2}) are defined in $\mathbb{F}_{p^2}$, and thus $C_1$ and $C_2$ are maximal (resp. minimal) over $\mathbb{F}_{p^2}\hspace{-0.3mm}$ if and only if $p \equiv 3$ (resp. $p \equiv 1$). As the birational map $D' \rightarrow C_1 \times_{\mathbb{P}^1\!} C_2$ is defined over $\mathbb{F}_{p^2}$, and hence $D'$ is also maximal (resp. minimal) over $\mathbb{F}_{p^2}$ when $p \equiv 3$ (resp. $p \equiv 1$).
    \item If $1-a$ is not a square in $\mathbb{F}_{p^2}$, then $\kappa = 1-a$ and $\kappa' = -(1-a)bcd$ are also not squares in $\mathbb{F}_{p^2}$. Since two transformations (\ref{trans1}) and (\ref{trans2}) are defined in $\mathbb{F}_{p^2}$, and thus $C_1$ and $C_2$ are minimal (resp. maximal) over $\mathbb{F}_{p^2}\hspace{-0.5mm}$ if and only if $p \equiv 3$ (resp. $p \equiv 1$). As the birational map $D' \rightarrow C_1 \times_{\mathbb{P}^1\!} C_2$ is defined over $\mathbb{F}_{p^2}$, and hence $D'$ is also minimal (resp. maximal) over $\mathbb{F}_{p^2}$ when $p \equiv 3$ (resp. $p \equiv 1$).\vspace{-0.5mm}
\end{itemize}
In any case, this theorem is true.
\end{proof}
\begin{Cor}\label{criterion}
Suppose that $D'$ is superspecial, then the following are true:
\begin{itemize}
\item If $p \equiv 3 \pmod{4}$, then $D'$ is maximal if and only if a/all $1-a,1-b,1-c,1-d$ is a square in $\mathbb{F}_{p^2}$.\vspace{-1mm}
\item If $p \equiv 1 \pmod{4}$, then $D'$ is maximal if and only if a/all $1-a,1-b,1-c,1-d$ is not a square in $\mathbb{F}_{p^2}$.
\end{itemize}
\end{Cor}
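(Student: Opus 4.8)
The plan is to obtain Corollary \ref{criterion} as an essentially immediate consequence of the proof of Theorem \ref{genus4}, once I record one elementary fact: under the superspeciality hypothesis the four elements $1-a,\,1-b,\,1-c,\,1-d$ all lie in the \emph{same} class of $\mathbb{F}_{p^2}^{\times}/(\mathbb{F}_{p^2}^{\times})^{2}$. Granting this, the ``a/all'' phrasing in the statement is legitimate: ``some one of $1-a,1-b,1-c,1-d$ is a square'' is then equivalent to ``every one of them is a square''.

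First I would extract from the proof of Theorem \ref{genus4}(2) precisely the criterion it establishes. There, after the transformations (\ref{trans1}) and (\ref{trans2}) --- both defined over $\mathbb{F}_{p^2}$ --- the curves $C_1$ and $C_2$ acquire Rosenhain models scaled by $\kappa$ and $\kappa'=-(1-a)bcd$ respectively, where $\kappa$ has the same square class in $\mathbb{F}_{p^2}$ as $1-a$; since $b,c,d$ are squares in $\mathbb{F}_{p^2}$ (Theorem \ref{genus4}(1)), $\kappa'$ also has the same square class as $1-a$. Together with Main Theorem B this yields: $D'$ is maximal over $\mathbb{F}_{p^2}$ if and only if either $p\equiv 3\pmod 4$ and $1-a\in(\mathbb{F}_{p^2}^{\times})^{2}$, or $p\equiv 1\pmod 4$ and $1-a\notin(\mathbb{F}_{p^2}^{\times})^{2}$. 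So it remains only to see that ``$1-a$'' may be replaced by any of ``$1-a,1-b,1-c,1-d$''.

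For that, I would use Main Theorem A(1) applied to the superspecial genus-$2$ curve $C_1$ --- exactly as in the proof of Theorem \ref{genus4}(1) --- which gives that the Rosenhain invariants $\lambda=(b-1)/(a-1)$, $\mu=(c-1)/(a-1)$, $\nu=(d-1)/(a-1)$ are all squares in $\mathbb{F}_{p^2}$. Multiplying by $1-a$ produces the literal identities $1-b=\lambda(1-a)$, $1-c=\mu(1-a)$, $1-d=\nu(1-a)$, so each of $1-b,1-c,1-d$ differs from $1-a$ by a square in $\mathbb{F}_{p^2}$; hence all four lie in one square class, as claimed, and substituting this into the criterion of the previous paragraph gives both bullets of the corollary. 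I do not foresee a genuine obstacle here; the only point requiring care is bookkeeping --- citing from the proof of Theorem \ref{genus4} exactly the ``$1-a$'' form of the maximality criterion, and noting that the identities $1-b=\lambda(1-a)$, etc., are exact equalities coming straight from $\lambda=(b-1)/(a-1)$, so that no appeal to whether $-1$ is a square in $\mathbb{F}_{p^2}$ is needed.
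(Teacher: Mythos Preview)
Your proposal is correct and follows essentially the same route as the paper: use Main Theorem~A(1) for $C_1$ to see that $\lambda=(b-1)/(a-1)$, $\mu=(c-1)/(a-1)$, $\nu=(d-1)/(a-1)$ are squares in $\mathbb{F}_{p^2}$, hence $1-a,1-b,1-c,1-d$ share a single square class, and then invoke the ``$1-a$'' criterion already established in the proof of Theorem~\ref{genus4}(2). The paper's own proof is just a terser version of exactly this argument.
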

\begin{proof}
With notations in the proof of Theorem \ref{genus4}, all values\vspace{-1.5mm}
\[
    \lambda = \frac{b-1}{a-1}, \quad \mu = \frac{c-1}{a-1}, \quad \nu = \frac{d-1}{a-1}\vspace{-1mm}
\]
are squares in $\mathbb{F}_{p^2}$. This implies all $1-a,1-b,1-c,1-d$ are squares or none of these is a square. Hence, this corollary directly follows from the proof of Theorem \ref{genus4}\,(2).
\end{proof}

\bigskip
\begin{flushright}
Ryo Ohashi \\
Graduate School of Information Science and Technology, \\
The University of Tokyo, \\
7-3-1 Hongo, Bunkyo-ku, Tokyo, 113-8656, \\
Japan. \\
E-mail: \texttt{ryo-ohashi@g.ecc.u-tokyo.ac.jp}
\end{flushright}
\end{document}